\newcommand{\x}{\mathbf x}
\newcommand{\y}{\mathbf y}
\newcommand{\z}{\mathbf z}
\newcommand{\h}{\mathbf h}
\newcommand{\vv}{\mathbf v}
\newtheorem{definition}{Definition}
\newtheorem{lemma}{\bf Lemma}
\newtheorem{theorem}{\bf Theorem}
\newtheorem{corollary}{\bf Corollary}
\renewenvironment{proof}{\noindent {\bf Proof: }}{\rm\\}
\theoremstyle{definition}
\newtheorem{remark}{Remark}{\rm}
{\rm}
\renewcommand{\p@algorithm}{\arabic{algorithm}\expandafter\@gobble}
\newcommand{\PARAMETERS}{\item[\textbf{Parameters:}]}
\newcounter{step}[algorithm]
\newcommand\STEP[2][\(\triangleright\)]{%
	\refstepcounter{step}
	\vskip 0.25\baselineskip
	\item[]\hskip -\algorithmicindent #1 \textbf{Step \arabic{step}}%
	\ifthenelse{\equal{\unexpanded{#2}}{}}{}{ (\texttt{#2})}%
	\textbf{.}%
}
\newenvironment{algo}{\algo}{}
\def\algo#1\end{%
	\noindent\fbox{%
	\begin{minipage}[b]{\dimexpr\columnwidth-\algorithmicindent\relax}
	\begin{algorithmic}
	#1
	\end{algorithmic}
	\end{minipage}
	}%
\end}
\begin{document}

\title{Non-smooth optimization for robust control of infinite-dimensional systems\thanks{Dedicated to the memory of Jon Borwein}}
\author{Pierre Apkarian
\thanks{ONERA, Control System Department, Toulouse, France}
\and
{Dominikus Noll}
\thanks{Universit\'e de Toulouse, Institut de Math\'ematiques, Toulouse, France}
\and
{Laleh Ravanbod}$\,^\ddag$
%\thanks{Universit\'e de Toulouse, Institut de Math\'ematiques, Toulouse, France}
}
\date{}

\maketitle

\begin{abstract}
We use a non-smooth trust-region method for $H_\infty$-control of infinite-dimensional systems. Our method applies in particular
to distributed and boundary control 
of partial differential equations. It is computationally
attractive as it avoids the use of system reduction or identification. For illustration the method is applied to control a reaction-convection-diffusion system,
a Van de Vusse reactor, and to a cavity flow control problem.

\vspace{.2cm}
\noindent {\bf Keywords:}
Robust control $\cdot$ non-smooth optimization $\cdot$ non-smooth trust-region method $\cdot$ infinite-dimensional system $\cdot$ boundary control
\end{abstract}

\section{Introduction}
Feedback control
of partial differential equations  and other infinite-dimensional systems encounters limitations due to computational issues. In state-space,
PDE models are 
overly complex and not directly  suited for controller synthesis.   
System reduction is required to 
bring the state-space down to a size where synthesis
methods are applicable.  Not only is this technically demanding, it also bears
the risk of producing inaccurate or oversimplified models, where  $H_\infty$-performance can no longer be guaranteed.

Computing the system transfer function directly from the infinite dimensional model
avoids this loss  of information, but encounters a second difficulty. Customary
strategies now try to fit a finite-dimensional state-space model  to the infinite-dimensional transfer function. This uses
optimization-based identification techniques, which are
in conflict with the $H_\infty$-objective, as the two optimization procedures in series are no longer meaningful. 
In addition, for unstable systems the identification often uses heuristics or ad hoc approaches, which have no certificates.

The method we propose here avoids both pitfalls. We synthesize controllers directly from the pre-computed frequency response, thereby avoiding 
system reduction and identification. Discretization for computation is performed in frequency space on a low-dimensional object, 
which avoids the loss of information.
Our tests demonstrate that this works fast and reliably, once the transfer function is available. It turns out that the success of our method hinges on 
the use of non-smooth
optimization. We use a non-smooth trust-region method first proposed in \cite{ANR:2016},  which
allows trial steps tailored to the specific application. We prove convergence under Kiwiel's aggregation rule, a question which had remained open in \cite{ANR:2016}.
This given an affirmative answers to a question already posed in \cite{rus} for the convex non-smooth trust-region method. 
For  complementary information on bundle methods see \cite{saga,hare,noll}, a mix of bundle and trust-regions is \cite{zowe}.

Design of controllers
in the frequency domain based on non-smooth optimization has already been performed in \cite{polak1,polak2,polak3,polak4,kiwiel_automatic,cullum}. Structured
$H-\infty$-control for infinite-dimensional systems is addressed in \cite{IJRNC}.
$H_\infty$-control of a heat exchange system is discussed in \cite{sano}.  These approaches use either unstructured controllers, are based on  matrix inequalities, or 
differ with regard to the optimization technique.

The structure of the paper is as follows. In section \ref{strategy} we outline our approach to $H_\infty$-control of
infinite dimensional systems.
In section \ref{optimize} we discuss optimization and present our non-smooth trust-region method originally proposed
in \cite{ANR:2016},  on which the present approach rests. 
Convergence of the non-smooth trust-region method is discussed in section \ref{convergence}.
In section \ref{application} we point to some particularities when applying the trust-region method to $H_\infty$-optimization.  
This concerns the choice of working model,  trial step, and stability barrier, as well as
the approximation error between the infinite-dimensional $H_\infty$-program and its
discretization.

Numerical results for applications to infinite-dimensional control problems are presented in section \ref{numerics}. 
Subsection \ref{illustration} shows how the method is, in general, applied to a boundary control problem,
subsection
\ref{1D} illustrates this in boundary $H_\infty$-control of a non-linear  reaction-convection-diffusion equation, 
and subsection \ref{reactor} for a non-linear Van de Vusse reactor.
Subsection \ref{sect_cavity} discusses a cavity flow control problem. 

\section{Control strategy}
\label{strategy}
We consider an abstract linear time-invariant control system of the form
\begin{align}
\label{system}
G: \quad  \left\{ 
\begin{array}{ll}
\dot{x} &= Ax + Bu \\
y &= Cx + Du
\end{array}
\right.
\end{align}
where $A$ is an unbounded linear operator on a Hilbert space $Z$  generating a strongly continuous semi-group,
$B$ a closed linear operator mapping the control input space $U$ to $Z$, 
$C$ a closed linear operator mapping $Z$ to the space $Y$ of measured outputs, and $D$ a closed linear operator mapping $U$ to $Y$.
For practical reasons we assume that $U \simeq \mathbb R^p$ and $Y \simeq \mathbb R^m$,   which reflects the fact that the process
is assessed by a finite number of sensors and actuators, rendering
our control law physically implementable.  This means that
$B,D$ are bounded, while $C$  is allowed to be closed unbounded. In addition,  the domains
of $A$ and $C$ satisfy  $D(A) \subset D(C)$.
As a consequence of the finite rank assumption on $Y,U$,
the transfer function $G(s) = C (sI-A)^{-1} B + D$ is defined on the resolvent set $\rho(A)$ and meromorphic on $\mathbb C$,
with values  $G(s)\in \mathbb C^p \times \mathbb C^m$, see e.g. \cite{JN:1988,EN}. 

We consider a class $K\in \mathscr K$ of feedback control laws which have similar state-space realizations
\begin{align}
\label{controller}
K: \quad \left\{ 
\begin{array}{cl}
\dot{x}_K \!& = A_Kx_K + B_Ky \\
u\! &= C_Kx_K + D_Ky
\end{array}
\right.
\end{align}
on a Hilbert space $Z_\mathscr K$, with input space $Y \simeq \mathbb R^m$ and output
space $U \simeq \mathbb R^p$, so that we can put $G$ and $K\in\mathscr K$ in lower feedback $\mathcal F_\ell(G,K)$ as in Figure \ref{figure1}.
Candidate controllers $K\in \mathscr K$ have to stabilize $\mathcal F_\ell(G,K)$  internally
in closed loop, by which we mean that the infinitesimal generator of the closed loop system generates an exponentially
stable semi-group \cite{JN:1988,zwart}. 

\begin{figure}[ht!]
%\begin{pspicture}(2,5)
%\psline(4,3)(4,5)(6,5)(6,3)(4,3)
%\uput[0](4.65,4){$P$}
%\psline[arrows=->,arrowsize=5pt 1](2.5,4.5)(4,4.5)  \uput[0](2.3,4.8){$w$}
%\psline[arrows=->,arrowsize=5pt 1](6,4.5)(7.5,4.5)  \uput[0](7.1,4.8){$z$}
%
%\psline(4.4,2.1)(5.6,2.1)(5.6,1.1)(4.4,1.1)(4.4,2.1)
%\uput[0](4.65,1.6){$K$}
%\psline[arrows=->,arrowsize=5pt 1](4.4,1.6)(3.5,1.6)(3.5,3.5)(4,3.5)
%\psline[arrows=->,arrowsize=5pt 1](6,3.5)(6.5,3.5)(6.5,1.6)(5.6,1.6)
%
%\uput[0](3,2.55){$u$}
%\uput[0](6.4,2.55){$y$}
%\end{pspicture}
\centerline{
\includegraphics[scale=0.32]{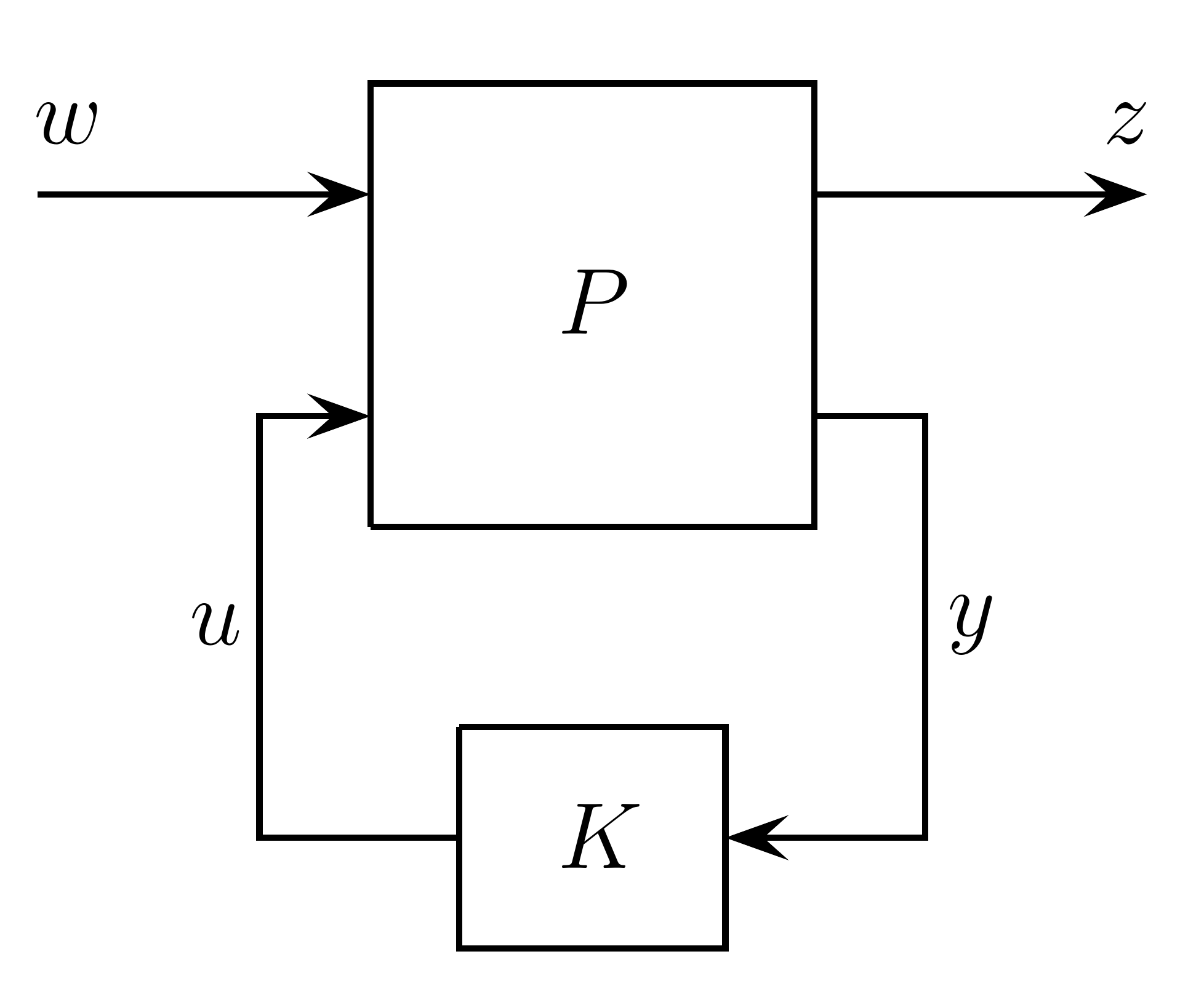}
}
\vspace*{-.1cm}
\caption{\small Lower feedback interconnection $\mathcal F_\ell(P,K)$ with $H_\infty$-performance channel $w \to z$. \label{figure1}}
\end{figure}

In $H_\infty$-control one has not only to assure stability in closed loop, but also to guarantee good performance
and robustness of the feedback system. For that purpose the system $G$ is embedded in a plant
$P$ with a similar state-space realization
\begin{align}
\label{plant}
P: \quad  \left\{
\begin{array}{rl}
\dot{x} &= \,Ax \,+ \,B_1 w\, + \,B_2 u \\
z &= C_1 x + D_{11} w + D_{12} u \\
y &= C_2x + D_{21} w + D_{22} u
\end{array}
\right.
\end{align}
where the channel $y\to u$ is used for control, the channel $w\to z$ for performance. Closing the $u$-$y$-loop  in (\ref{plant}) with (\ref{controller}) as in 
Figure \ref{figure1} leaves us with the closed-loop transfer function
$T_{wz}(K)$ 
from exogenous input $w$ to regulated output $z$.
The $H_\infty$-control problem consists now in minimizing   the $L^2$-$L^2$-operator norm of $T_{wz}(K)$ over a suitable class $K\in \mathscr K$ of admissible control laws.
This operator  norm is also known as
the $H_\infty$-norm, given as
\begin{equation}
\label{boyd}
\|T_{wz}(K) \|_\infty = \max_{\omega \in [0,\infty]} \overline{\sigma} \left( T_{wz}(K,j\omega)\right),
\end{equation}
where $\overline{\sigma}(M)$ denotes the maximum singular value of a matrix $M$. 
If candidate controllers $K\in \mathscr K$ in (\ref{controller}) are parametrized as $K(\x)$ for a finite-dimensional vector $\x\in \mathbb R^n$ of tunable parameters, then  
this $H_\infty$-optimization program takes the form 
\begin{eqnarray}
\label{infinite_hinfstruct}
\begin{array}{ll}
\mbox{minimize} & f_\infty(\x) = \displaystyle\max_{\omega\in [0,\infty]} \overline{\sigma}\left( T_{wz}(K(\x),j\omega) \right)  \\
\mbox{subject to} & \mbox{$K(\x)$ stabilizes $G$ in closed-loop} \\
&\x\in \mathbb R^n
\end{array}
\end{eqnarray}
For finite-dimensional real-rational $P,K(\x)$ the objective
(\ref{boyd}) may be computed by an iterative procedure  \cite{boyd1,boyd2}, but for infinite dimensional $G,K(\x)$, this is no longer possible, and we need to approximate (\ref{boyd})
on a finite grid $\Omega_{\rm opt} \subset [0,\infty]$. Introducing the discretized version
\[
\|T_{wz}(K)\|_{\infty,d} = \max_{\omega\in \Omega_{\rm opt}} \overline{\sigma}\left(  T_{wz}(K,j\omega) \right)
\]
of the $H_\infty$-norm on the grid $\Omega_{\rm opt}$, this
leads to the discretized  $H_\infty$-optimization program
\begin{eqnarray}
\label{hinfstruct}
\begin{array}{ll}
\mbox{minimize} & f(\x) = \displaystyle\max_{\omega\in \Omega_{\rm opt}} \overline{\sigma}\left( T_{wz}(K(\x),j\omega) \right)  \\
\mbox{subject to} & \mbox{$K(\x)$ stabilizes $G$ in closed-loop} \\
&\x\in \mathbb R^n
\end{array}
\end{eqnarray}
to which our non-smooth optimization method is applied.
The overall procedure for $H_\infty$-control
is now given in algorithm \ref{algo3}.

\begin{algorithm}[ht]
\caption{Infinite-dimensional $H_\infty$-synthesis \label{algo3}}
\begin{algo}
\PARAMETERS Tolerance $\vartheta >0$.
\STEP{Steady-state} Compute steady-state and linearize infinite-dimensional system about steady-state.
\STEP{Transfer function}
Use linearized infinite-dimensional system to compute transfer function $G(j\omega)$, either formally for arbitrary $\omega \in [0,\infty]$, or numerically at a very high precision
on a very fine grid $\Omega_{\rm fine}$.
\STEP{Plant} Set up plant $P$ which defines the $H_\infty$-performance channel $w\to z$.
\STEP{Grid for optimization} Find initially stabilizing $K(\x_0)$ for $G$
and use it to compute grid for optimization $\Omega_{\rm opt}$ such that
$ \|T_{wz}(K(\x_0))\|_\infty \leq  f(\x_0)  + \vartheta$. Either use a formal or a numerical function $G(j\omega)$, or extract $\Omega_{\rm opt}$ from
the pre-computed high precision grid $\Omega_{\rm fine}$.
\STEP{Non-smooth optimization} Use non-smooth trust-region algorithm \ref{algo1} to compute locally optimal solution
$K(\x^*)$ of  (\ref{hinfstruct}).
%\STEP{Stability} Use Nyquist criterion to test {\em a posteriori}  whether $K(\x^*)$ is closed-loop stabilizing. 
\STEP{Refined grid} Check whether $\| T_{wz}(K(\x^*))\|_\infty \leq \| T_{wz}(K(\x^*))\|_{\infty,d} + \vartheta$. If not then add nodes to $\Omega_{\rm opt}$
and go back to step 5.
\end{algo}
\end{algorithm}

\begin{remark}
It should be stressed that  the computation of $G(j\omega)$ in steps 1 and 2 of algorithm \ref{algo3} is the only moment where the full infinite-dimensional,
or likewise, large-scale finite-dimensional  model is used. Since this step is performed prior to optimization, and typically
$|\Omega_{\rm opt}| \ll | \Omega_{\rm fine}|$,  optimization is speedy.  Also, the process
of finding the correct $P$, which needs going back to step 3 of algorithm \ref{algo3}, is not slowed down by steps 1 and 2. In other words,
once the transfer function $G(j\omega)$ is available, the complexity of the process is the same as
that of a finite-dimensional structured $H_\infty$-design procedure in the sense of \cite{tac,barratt}.
This will be illustrated in our experimental section.
\end{remark}

\begin{remark}
Stability in closed-loop  in step 4, and during optimization in step 5, uses the well-known Nyquist stability test in tandem with the barrier
approach to be addressed in section \ref{barrier}. For the theoretical justification in the context of infinite-dimensional systems see \cite{IJRNC}.
\end{remark}

We now address the individual steps of algorithm \ref{algo3}. The central ingredient is optimization, 
which is needed to solve (\ref{hinfstruct}), and
which is discussed in the next sections \ref{optimize} and \ref{convergence}. 
Generation of the grid $\Omega_{\rm opt}$ in step 4, and the  certificate in step 6,  are discussed in section \ref{application}.

\section{Non-smooth trust-region method}
\label{optimize}
We work with the non-smooth trust-region method introduced
in \cite{ANR:2016}, which has already been successfully used
in mechanical contact problems \cite{gwinner}, and in system theory \cite{ANR:2016} for computing the worst-case
$H_\infty$-performance of a system, its stability margin, and its distance to instability.  Here we use it for 
$H_\infty$-control of infinite-dimensional systems.  We also answer a question left open in \cite[Remark 16]{ANR:2016}, which
concerns the theoretical justification of Kiwiel's aggregation technique \cite{kiwiel} in non-smooth trust-regions. This question goes back to  \cite{rus}
for the convex trust-region method, but had until now remained  open. An affirmative 
answer will be obtained in section \ref{convergence}.

\subsection{Presentation of the algorithm}
We briefly recall the essentials of the non-smooth trust-region method. For the present work it is sufficient to apply 
it to optimization programs of the form
\begin{equation}
\label{program}
\min_{\x \in \mathbb R^n} f(\x),
\end{equation}
where $f:\mathbb R^n \to \mathbb R$ is locally Lipschitz but non-smooth and non-convex. Following \cite{noll},  a function
$\phi:\mathbb R^n \times \mathbb R^n \to \mathbb R$ is called a {\em model} of $f$ if it satisfies the following
properties:
\begin{itemize}
\item[$(M_1)$] $\phi(\cdot,\x)$ is convex, $\phi(\x,\x) = f(\x)$, and $\partial_1\phi(\x,\x) \subset \partial f(\x)$.
\item[$(M_2)$] If $\y_k\to \x$, then there exist $\epsilon_k\to 0^+$ such that $f(\y_k) \leq \phi(\y_k,\x)+ \epsilon_k\|\y_k-\x\|$.
\item[$(M_3)$] If $\x_k\to \x, \y_k\to \y$, then $\limsup_{k\to\infty} \phi(\y_k,\x_k) \leq \phi(\y,\x)$.
\end{itemize}
We may interpret $\phi(\cdot,\x)$ as a substitute for the first-order Taylor expansion of  $f$ at $\x$.  For convergence
theory we need a slightly stronger type of model, which is given  by the following:

\begin{definition}
A first-order model $\phi$ of $f$ is called {\em strict} if it satisfies the following stronger version of axiom $(M_2)$:
\begin{itemize}
\item[$(\widehat{M}_2)$] If $\x_k,\y_k\to \x$, then there exist $\epsilon_k\to 0^+$ such that $f(\y_k) \leq \phi(\y_k,\x_k)+ \epsilon_k\|\y_k-\x_k\|$.
\end{itemize}
\hfill $\square$
\end{definition}

\noindent
The difference between $(M_2)$ and the strict version $(\widehat{M}_2)$ is analogous to the difference between 
differentiability and strict differentiability, hence the nomenclature.
For additional information on the model concept see
\cite{noll,ANP:2009,ANR:2016}.

\begin{remark}
\label{max_eig}
A typical example of a strict model $\phi$ is obtained when $f$ is a maximum eigenvalue function
$f(\x) = \lambda_1 \left( F(\x)\right)$, with $F:\mathbb R^n \to \mathbb S^m$ a class $C^1$-mapping into the space of $m\times m$ Hermitian matrices.
We take
$\phi(\y,\x) = \lambda_1 \left( F(\x) + F'(\x)(\y-\x) \right)$. See \cite{noll,ANP:2009}.
\end{remark}

\begin{definition}
{\rm
Let $\x$ be the current serious iterate of the trust-region algorithm, $\z$ a trial step. Let $g$ be a subgradient of $\phi(\cdot,\x)$ at $\z$.
Then the affine function $m_\z(\cdot,\x)=\phi(\z,\x) + g^\top (\cdot - \z)$ is called a} cutting plane {\rm of $f$ at serious iterate $\x$
and trial step $\z$.} \hfill $\square$
\end{definition}

If $\z=\x$, then due to axiom $(M_1)$ a cutting plane $m_\x(\cdot,\x)$ at serious iterate $\x$ and trial step $\z=\x$ is just a tangent plane to $f$ at $\x$. 
Since $m_\x(\x,\x)=f(\x)$, including $m_\x(\cdot,\x)$ in the working model $\phi_k(\cdot,\x)$ at $\x$ guarantees exactness
$\phi_k(\x,\x)=f(\x)$ of the working model at all counters $k$.
We cast this in the following

\begin{definition}
A cutting plane $m_\x(\cdot,\x)$ at serious iterate $\x$ and trial step $\z=\x$  is called  an {\em exactness
plane}.  \hfill $\square$
\end{definition}

As is standard in bundle and cutting plane algorithms, by storing cutting planes at unsuccessful trial steps $\z^k$, we accumulate information, which
we use to build polyhedral models of $f$ near $\x$. We use the notation
$\phi_k(\cdot,\x)$ for these {\em working models} of $f$ formed by cutting planes, where $k$ denotes the counter of the inner loop.
Note that $\phi_k\leq \phi$ by construction of the cutting planes. 
If in addition a positive semi-definite symmetric matrix $Q(\x)\succeq 0$ is available as a substitute for the Hessian of $f$ at $\x$, then
we call $\Phi_k(\cdot,\x) = \phi_k(\cdot,\x) + \frac{1}{2} (\cdot-\x)^\top Q(\x) (\cdot-\x)$
a {\em second-order working model} of $f$ at serious iterate $\x$.

We are now ready to present the bundle trust-region algorithm. (See algorithm \ref{algo1}).

\begin{algorithm}[ht!]
\caption{Non-smooth trust-region algorithm \label{algo1}}
\begin{algo}
\PARAMETERS $0 < \gamma < \widetilde{\gamma} < 1$, $0 < \gamma < \Gamma \leq 1$, $0 < \theta \ll 1$, $M \geq 1$, $q > 0$.
\STEP{Initialize outer loop} Fix initial iterate $\x^1$ and memory trust-region radius $R_1^\sharp > 0$. Initialize $Q_1 \succeq 0$ with $\|Q_1\|\leq q$.
Put outer loop counter $j=1$.
\STEP{Stopping test} At outer loop counter $j$, stop if $\x^j$ is a critical point of (\ref{program}). Otherwise go to inner loop.
\STEP{Initialize inner loop}  Put inner loop counter $k=1$ and initialize trust-region radius as $R_1 = R_j^\sharp$. Build polyhedral first-order working model 
$\phi_1(\cdot,\x^j)$, where at least one exactness plane at $\x^j$ is included. Possibly enrich by adding recycled planes from previous steps, or by including
anticipated cutting planes. Build second-order working model
$\Phi_1(\cdot,\x^j) = \phi_1(\cdot,\x^j) + \frac{1}{2}(\cdot-\x^j)^\top Q_j (\cdot-\x^j)$.
\STEP{Trial step generation} At inner loop counter $k$ compute solution $\y^k$ of trust-region tangent program \\
\vspace*{.1cm}
\hspace*{4cm}$\mbox{minimize} \quad\; \Phi_k(\y,\x^j)$ \\
\hspace*{4cm}$\mbox{subject to}  \;\; \; \|\y-\x^j\| \leq R_k$\\
\vspace*{.1cm}
Then admit any  $\z^k$ satisfying $\|\z^k-\x^j\| \leq M \| \y^k-\x^j\|$ and $f(\x^j)-\Phi_k(\z^k,\x^j) \geq
\theta \left( f(\x^j) - \Phi_k(\y^k,\x^j) \right)$ as trial step. 
\STEP{Acceptance test} If
%\hspace*{3cm}
$$\rho_k =\displaystyle \frac{f(\x^j)-f(\z^k)}{f(\x^j)-\Phi_k(\z^k,\x^j)} \geq \gamma$$ 
put $\x^{j+1}=\z^k$ (serious step), quit inner loop and goto step 8. Otherwise (null step), continue inner loop with step 6.
\STEP{Update working model}
Generate a cutting plane $m_k(\cdot,\x^j)$ of $f$ at the unsuccessful trial step $\z^k$ and add it to the polyhedral model. Possibly taper out
$\phi_k$ by removing some of the older cuts, and build new first-order working $\phi_{k+1}(\cdot,\x^j)$. Then
$\Phi_{k+1}(\cdot,\x^j)=\phi_{k+1}(\cdot,\x^j) + \frac{1}{2}(\cdot-\x^j)^\top Q_j (\cdot-\x^j)$ is the new second-order working model. Continue with step 7.
\STEP{Update trust-region radius} Compute secondary control parameter \\
\hspace*{3cm}$$\widetilde{\rho}_k =\displaystyle \frac{f(\x^j)-\phi_{k+1}(\z^k,\x^j)}{f(\x^j)-\Phi_{k}(\z^k,\x^j)}   $$
and put 
$$R_{k+1}=\bigg\{ {R_k \; \, \,\,\mbox{ if }\; \widetilde{\rho}_k < \widetilde{\gamma} \atop \frac{1}{2}R_k \; \mbox{ if } \;\widetilde{\rho}_k \geq \widetilde{\gamma}}$$
Increase inner loop counter $k$ and go back to step 4.
\STEP{Update memory radius} Store new memory trust-region radius
$$ R_{j+1}^\sharp = \bigg\{ { R_k\, \,\,\; \mbox{ if }\; \rho_k < \Gamma \atop 2 R_k \; \mbox{ if }\; \rho_k \geq \Gamma}$$
Update $Q_j \to Q_{j+1}$ respecting $Q_{j+1} \succeq 0$ and $\|Q_{j+1}\| \leq q$. Increase outer loop counter $j$ and go back to step 2.
\end{algo}
\end{algorithm}

\newpage
\begin{remark}
Before we discuss convergence of algorithm \ref{algo1} in the next section, we recall the form of the tangent program in step 4 from \cite{ANR:2016}. 
Let the first-order working model
at inner loop counter $k$ have the form $\phi_k(\cdot,\x^j) = \max_{i\in I_k} a_i + g_i^\top (\cdot - \x^j)$ for some finite set $I_k$, and suppose the
trust-region norm is the maximum norm. Then the tangent  program  at serious iterate $\x^j$ and inner loop instant $k$ is the following CQP
\begin{eqnarray}
\label{tangent}
\begin{array}{ll}
\mbox{minimize} & t + \frac{1}{2} (\y-\x^j)^\top Q_j(\y-\x^j) \\
\mbox{subject to} & a_i + g_i^\top (\y-\x^j) \leq t, \;\; i\in I_k\\
&-R_k \leq \y_i - \x_i^j \leq R_k \;\;, i=1,\dots,n
\end{array}
\end{eqnarray}
with decision variable $(t,\y) \in \mathbb R\times \mathbb R^n$, giving rise to the solution $\y^k$ in step 4.
\end{remark}

\subsection{Convergence}
\label{convergence}
In this section we prove  convergence of the trust-region  algorithm \ref{algo1} toward a Clarke critical point. As is standard,
we start by proving that
the inner loop ends finitely if $0\not\in \partial f(\x^j)$, where throughout $\partial f$ denotes the Clarke subdifferential.
During this part of the proof we write $\x = \x^j$ and $Q = Q_j$, as those are fixed during the inner loop at 
counter $j$. Note that by the necessary optimality condition
for the tangent program in step 4 of algorithm \ref{algo1}, there exists a subgradient 
$g_k^*\in \partial_1 \phi_k(\y^k,\x)$ such that $g_k^*+Q(\y^k-\x)+\vv_k=0$, where $\vv_k$ is in the normal cone
to the trust-region norm ball $B(\x,R_k)$ at $\y^k$. 

\begin{definition}
We call $g_k^*$ the {\em aggregate subgradient}.
The affine function
$m_k^*(\cdot,\x) = \phi_k(\y^k,\x) + g_k^{*\top} (\cdot - \y^k)$ is called the {\em aggregate plane}. \hfill $\square$
\end{definition}

\begin{lemma}
\label{est}
There exists  $\sigma > 0$ depending only on the constants
$\theta \in (0,1)$ and $M>0$ in algorithm {\rm \ref{algo1}} and on the trust-region norm $\|\cdot\|$, such that for every trial point $\z^k$ at inner loop instant $k$ with corresponding
solution $\y^k$ of the trust-region tangent program in step {\rm 4}, and for the corresponding aggregate subgradient $g^*_k \in \partial_1 \phi(\y^k,\x)$, we have the estimate
\begin{equation}
\label{kolmogoroff}
f(\x)-\phi_k(\z^k,\x) \geq \sigma \|g^*_k + Q(\y^k-\x)\| \| \z^k-\x\|.
\end{equation}
\end{lemma}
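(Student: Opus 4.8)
The plan is to bound the decrease $f(\x)-\phi_k(\z^k,\x)$ from below by exploiting, in turn, (i) the defining relaxation inequality for the trial step $\z^k$, (ii) the predicted decrease of the tangent program at its solution $\y^k$, and (iii) the optimality condition $g_k^*+Q(\y^k-\x)+\vv_k=0$ together with the geometry of the normal cone to the trust-region ball. First I would use axiom $(M_1)$ and convexity of $\phi_k(\cdot,\x)$: since $m_k^*(\cdot,\x)=\phi_k(\y^k,\x)+g_k^{*\top}(\cdot-\y^k)$ is an affine minorant of $\phi_k(\cdot,\x)$ (the aggregate plane lies below the working model), and since $\phi_k(\x,\x)=f(\x)$, evaluating $\Phi_k$ at $\x$ gives $f(\x)=\Phi_k(\x,\x)\ge m_k^*(\x,\x)$, hence
\[
f(\x)-\phi_k(\z^k,\x)\ \ge\ f(\x)-m_k^*(\z^k,\x)\ \ge\ m_k^*(\x,\x)-m_k^*(\z^k,\x)\ =\ -\,g_k^{*\top}(\x-\z^k)+\big(f(\x)-m_k^*(\x,\x)\big),
\]
so that one reduces everything to controlling $g_k^{*\top}(\z^k-\x)$ from below — i.e. producing a term of the order $\|g_k^*+Q(\y^k-\x)\|\,\|\z^k-\x\|$.

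Next I would bring in the step 4 relaxation conditions $\|\z^k-\x\|\le M\|\y^k-\x\|$ and $f(\x)-\Phi_k(\z^k,\x)\ge\theta\big(f(\x)-\Phi_k(\y^k,\x)\big)$, which let me transfer estimates from $\z^k$ back to the genuine tangent-program solution $\y^k$. For $\y^k$ itself, I would combine the optimality condition with the normal-cone structure: writing $d_k=\y^k-\x$ (so $\|d_k\|\le R_k$), the vector $\vv_k$ in the normal cone at $\y^k$ satisfies $\vv_k^\top(\x-\y^k)\ge 0$ (outer normal), whence $g_k^{*\top}(-d_k)=\big(Q d_k+\vv_k\big)^\top d_k\ge d_k^\top Q d_k\ge 0$, and more importantly the predicted decrease of the tangent program satisfies a lower bound of the form $f(\x)-\Phi_k(\y^k,\x)\ge c\,\|g_k^*+Q d_k\|\,\|d_k\|$ for a constant $c$ depending only on the norm (this is the familiar Cauchy-point-type estimate: the projected-gradient step along $-(g_k^*+Q d_k)$ into the ball decreases the model by at least this much, and $\Phi_k(\y^k,\x)$ is no larger since $\y^k$ is the minimizer). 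Chaining these together with $\|\z^k-\x\|\le M\|d_k\|$ yields $f(\x)-\phi_k(\z^k,\x)\ge f(\x)-\Phi_k(\z^k,\x)\ge \theta c\,\|g_k^*+Q d_k\|\,\|d_k\|\ge (\theta c/M)\,\|g_k^*+Q d_k\|\,\|\z^k-\x\|$, giving $\sigma=\theta c/M$.

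The main obstacle I anticipate is making the Cauchy-point-type inequality $f(\x)-\Phi_k(\y^k,\x)\ge c\,\|g_k^*+Q(\y^k-\x)\|\,\|\y^k-\x\|$ fully rigorous when $\y^k$ lies in the interior of the trust-region ball versus on its boundary, and when the norm is a general (e.g. $\ell_\infty$) norm rather than Euclidean, since then the normal cone at a boundary point is not a single ray and the relation between $\|\vv_k\|$, $\|d_k\|$ and the directional behaviour of $\Phi_k$ needs care; this is where the dependence of $\sigma$ on the choice of trust-region norm enters. One must also handle the degenerate case $\y^k=\x$ (then $g_k^*\in\partial_1\phi_k(\x,\x)$ and the right-hand side involves $\|g_k^*\|$, but $\|\z^k-\x\|\le M\cdot 0=0$ forces $\z^k=\x$, making \eqref{kolmogoroff} trivially $0\ge 0$), so the estimate is vacuous there and no contradiction arises. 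Everything else is routine convex-analytic bookkeeping with the minorization $\phi_k\le\phi$ and the model axioms.
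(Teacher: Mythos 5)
The paper gives no written-out proof here---it simply defers to \cite[Lemma 1]{ANR:2016}---so your argument has to stand on its own. The chain in your second paragraph is indeed the right one, and it is in substance the argument of that reference: $f(\x)-\phi_k(\z^k,\x)\ge f(\x)-\Phi_k(\z^k,\x)\ge\theta\bigl(f(\x)-\Phi_k(\y^k,\x)\bigr)$ using $Q\succeq 0$ and the step-4 rule, then a lower bound on the predicted decrease at $\y^k$, then $\|\z^k-\x\|\le M\|\y^k-\x\|$, giving $\sigma=\theta c/M$. Before that, though, your opening display is backwards: since the aggregate plane minorizes the working model, $m_k^*(\z^k,\x)\le\phi_k(\z^k,\x)$ yields $f(\x)-m_k^*(\z^k,\x)\ge f(\x)-\phi_k(\z^k,\x)$, i.e.\ the useless direction. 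Fortunately your final chain never relies on that paragraph, so this is a false start rather than a fatal flaw.

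The genuine gap is the crux you yourself flag: the inequality $f(\x)-\Phi_k(\y^k,\x)\ge c\,\|g_k^*+Q(\y^k-\x)\|\,\|\y^k-\x\|$ is \emph{not} the familiar Cauchy-point estimate. The Cauchy decrease controls the model reduction by the gradient at the center $\x$ and has the form $\tfrac12\|g\|\min(R_k,\|g\|/\|Q\|)$, whereas the vector $g_k^*+Q(\y^k-\x)=-\vv_k$ is the residual at the \emph{solution} $\y^k$; a projected-gradient heuristic does not deliver the product form you need. The correct mechanism is the subgradient inequality plus normal-cone duality: $g_k^*+Q(\y^k-\x)$ is a subgradient of the convex function $\Phi_k(\cdot,\x)$ at $\y^k$, hence
\begin{equation*}
f(\x)-\Phi_k(\y^k,\x)=\Phi_k(\x,\x)-\Phi_k(\y^k,\x)\ \ge\ \bigl(g_k^*+Q(\y^k-\x)\bigr)^{\top}(\x-\y^k)=\vv_k^{\top}(\y^k-\x),
\end{equation*}
and since $\vv_k$ lies in the normal cone to $B(\x,R_k)$ at $\y^k$, one has $\vv_k^{\top}u\le\vv_k^{\top}(\y^k-\x)$ for all $\|u\|\le R_k$, so $\vv_k^{\top}(\y^k-\x)=R_k\|\vv_k\|_*\ge\|\y^k-\x\|\,\|\vv_k\|_*$ with $\|\cdot\|_*$ the dual of the trust-region norm. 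Equivalence of $\|\cdot\|_*$ with the norm appearing in (\ref{kolmogoroff}) supplies the constant $c$ and is precisely where the dependence of $\sigma$ on the trust-region norm enters; the interior case ($\vv_k=0$, both sides vanish) and your degenerate case $\y^k=\x$ are covered automatically. With this substitution your chain closes and the lemma follows.
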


\begin{proof}
This is essentially the same as \cite[Lemma 1]{ANR:2016}. \hfill $\square$
\end{proof}

\begin{lemma}
Suppose the inner loop at $\x$ turns infinitely, and $\liminf_{k\to \infty} R_k=0$. Then $\x$ is a critical point of {\rm (\ref{program})}.
\end{lemma}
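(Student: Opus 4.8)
The plan is to deduce criticality from the behaviour of the inner-loop criticality measure $\|g_k^*+Q(\y^k-\x)\|$ (equivalently $\|\vv_k\|$, the length of the normal-cone multiplier of the tangent program in step~4): I will show it has a subsequence tending to $0$, and then read off $0\in\partial f(\x)$ from the aggregate planes. Throughout, $\x$ and $Q$ are fixed; since the inner loop never terminates, every $\z^k$ is a null step, that is, $\rho_k<\gamma$. In step~7 the radius is either kept or halved, so $R_k$ is non-increasing, and the hypothesis $\liminf_k R_k=0$ therefore forces $R_k\to 0$; this means the halving alternative $\widetilde\rho_k\ge\widetilde\gamma$ is taken for infinitely many $k$ — call this infinite index set $\mathcal K$. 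From $\|\y^k-\x\|\le R_k$ and $\|\z^k-\x\|\le M\|\y^k-\x\|$ we get $\|\y^k-\x\|,\|\z^k-\x\|\le MR_k\to 0$, hence $\y^k\to\x$ and $\z^k\to\x$. Put $\Delta_k:=f(\x)-\Phi_k(\z^k,\x)$, which is $\ge 0$ because $\y^k$ minimizes the convex model $\Phi_k(\cdot,\x)$ over a ball centred at $\x$ (so $\Phi_k(\y^k,\x)\le\Phi_k(\x,\x)=f(\x)$) and step~4 forces $f(\x)-\Phi_k(\z^k,\x)\ge\theta\,(f(\x)-\Phi_k(\y^k,\x))$. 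If $\Delta_k=0$ for some $k$, then $\Phi_k(\y^k,\x)=f(\x)=\Phi_k(\x,\x)$ makes $\x$ an interior minimizer of the convex model $\Phi_k(\cdot,\x)$, so $0\in\partial_1\phi_k(\x,\x)\subseteq\partial_1\phi(\x,\x)\subseteq\partial f(\x)$ by $(M_1)$ and we are done; so from now on assume $\Delta_k>0$ (hence $\z^k\ne\x$) for all $k$.

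The core of the proof is a contradiction establishing $\liminf_{k\in\mathcal K}\|g_k^*+Q(\y^k-\x)\|=0$. Suppose not, so that $\|g_k^*+Q(\y^k-\x)\|\ge\eta>0$ for all large $k\in\mathcal K$. Lemma~\ref{est} gives $f(\x)-\phi_k(\z^k,\x)\ge\sigma\eta\|\z^k-\x\|$; subtracting the quadratic term, which is bounded by $\tfrac12 q\|\z^k-\x\|^2$ with $q\ge\|Q\|$, and using $\|\z^k-\x\|\le MR_k\to 0$, yields $\Delta_k\ge\tfrac12\sigma\eta\|\z^k-\x\|$ for $k$ large. The null-step test $\rho_k<\gamma$ says $f(\z^k)>f(\x)-\gamma\Delta_k$, while axiom $(M_2)$ applied at the fixed point $\x$ (the strict version $(\widehat M_2)$ is not needed here) gives $\epsilon_k\to 0^+$ with $f(\z^k)\le\phi(\z^k,\x)+\epsilon_k\|\z^k-\x\|$; together these give $\phi(\z^k,\x)>f(\x)-\gamma\Delta_k-\epsilon_k\|\z^k-\x\|$. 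On the other hand, the cutting plane generated at the null step $\z^k$ is one of the affine pieces of $\phi_{k+1}(\cdot,\x)$ with value $\phi(\z^k,\x)$ there, and $\phi_{k+1}\le\phi$, so $\phi_{k+1}(\z^k,\x)=\phi(\z^k,\x)$; hence the halving condition $\widetilde\rho_k\ge\widetilde\gamma$ reads $\phi(\z^k,\x)\le f(\x)-\widetilde\gamma\Delta_k$. Combining the lower and upper bounds on $\phi(\z^k,\x)$ gives $(\widetilde\gamma-\gamma)\Delta_k<\epsilon_k\|\z^k-\x\|$, and with $\Delta_k\ge\tfrac12\sigma\eta\|\z^k-\x\|$ and $\z^k\ne\x$ this forces $(\widetilde\gamma-\gamma)\tfrac12\sigma\eta<\epsilon_k$ for all large $k\in\mathcal K$ — impossible, since $\widetilde\gamma>\gamma$, $\sigma,\eta>0$, and $\epsilon_k\to 0$.

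It remains to turn the vanishing criticality measure into criticality. Along a subsequence of $\mathcal K$ we have $g_k^*+Q(\y^k-\x)\to 0$; since $\y^k\to\x$ and $\|Q\|\le q$ we also have $Q(\y^k-\x)\to 0$, so $g_k^*\to 0$ along that subsequence. Consider the aggregate planes $m_k^*(\cdot,\x)=\phi_k(\y^k,\x)+g_k^{*\top}(\cdot-\y^k)$: since $g_k^*\in\partial_1\phi_k(\y^k,\x)$ and $\phi_k\le\phi$, each is an affine minorant of $\phi(\cdot,\x)$. Moreover $\phi_k(\y^k,\x)$ is squeezed between the value at $\y^k$ of the exactness plane (an affine function equal to $f(\x)$ at $\x$, so its value at $\y^k$ tends to $f(\x)$) and $\phi(\y^k,\x)$, which tends to $\phi(\x,\x)=f(\x)$ by continuity of the finite convex function $\phi(\cdot,\x)$; hence $\phi_k(\y^k,\x)\to f(\x)$. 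Therefore, for every fixed $\y$, $\phi(\y,\x)\ge m_k^*(\y,\x)=\phi_k(\y^k,\x)+g_k^{*\top}(\y-\y^k)\to f(\x)$, so $\phi(\y,\x)\ge f(\x)=\phi(\x,\x)$ for all $\y$. Thus $\x$ minimizes the convex function $\phi(\cdot,\x)$, so $0\in\partial_1\phi(\x,\x)$, and $(M_1)$ gives $0\in\partial f(\x)$: $\x$ is a critical point of (\ref{program}).

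I expect the main obstacle to be arranging the middle paragraph so that the four estimates close up: Lemma~\ref{est} converts the assumed non-vanishing of the criticality measure into a coercive lower bound on the predicted decrease $\Delta_k$; the null-step test together with $(M_2)$ gives a lower bound on $\phi(\z^k,\x)$; the identity $\phi_{k+1}(\z^k,\x)=\phi(\z^k,\x)$ lets the halving rule $\widetilde\rho_k\ge\widetilde\gamma$ deliver the matching upper bound; and one must check that the $o(\|\z^k-\x\|)$ remainder terms are absorbed precisely because $\widetilde\gamma-\gamma>0$. A lesser nuisance is the degenerate case $\Delta_k=0$, which is disposed of at once through the optimality condition of the tangent program.
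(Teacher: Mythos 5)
Your argument is correct and follows essentially the same route as the paper's proof: deduce $R_k\to 0$ and hence $\y^k,\z^k\to\x$, use Lemma \ref{est} together with the null-step test $\rho_k<\gamma$, axiom $(M_2)$, and the halving condition $\widetilde{\rho}_k\geq\widetilde{\gamma}$ (via the identity $\phi_{k+1}(\z^k,\x)=\phi(\z^k,\x)$) to force the aggregate subgradients to vanish along a subsequence, then pass to the limit in the subgradient inequality using $\phi_k(\y^k,\x)\to f(\x)$ from the exactness-plane squeeze. Your explicit treatment of the degenerate case $\Delta_k=0$ and the observation that the non-strict axiom $(M_2)$ suffices here are minor refinements, not a different method.
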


\begin{proof}
According to step 7 of  algorithm \ref{algo1}
we have $\widetilde{\rho}_k \geq \widetilde{\gamma}$ for infinitely many $k\in \mathcal K$. Since $R_k$ is never increased during the inner loop, that implies
$R_k\to 0$. Hence $\y^k,\z^k\to \x$ as $k\to \infty$, where we use the trial step generation rule of step 4 of algorithm \ref{algo1}. We argue that this implies $\phi_k(\z^k,\x) \to f(\x)$. 

Indeed,
$\limsup_{k\to\infty} \phi_k(\z^k,\x) \leq \limsup_{k\to \infty} \phi(\z^k,\x) = \lim_{k\to\infty} \phi(\z^k,\x)=f(\x)$ is always true due to
$\phi_k\leq \phi$ and axiom $(M_1)$. On the other hand, $\phi_k$ includes (i.e. dominates) an exactness plane $m_0(\cdot,\x)=f(\x) + g_0^\top(\cdot-\x)$, hence  
$f(\x) = \lim_{k\to\infty} m_0(\z^k,\x) \leq \liminf \phi_k(\z^k,\x)$. The two together show $\phi_k(\z^k,\x)\to f(\x)$, and then immediately also 
$\Phi_k(\z^k,\x)\to f(\x)$. We also readily obtain $\phi_k(\y^k,\x) \to f(\x)$ from the link between $\z^k,\y^k$ in step 4 of algorithm \ref{algo1}.

We now prove that $\liminf_{k\to \infty}\|g_k^*\|= 0$.
Assume on the contrary that $\|g_k^*\|\geq \eta > 0$ for all $k$. Choose $k$ large enough to have
$\|g_k^*+Q(\y^k-\x)\| \geq \frac{1}{2} \|g_k^*\|$. 
{\color{black} This is possible because the sequence $g_k^*$ is bounded away from 0 and  $Q({\bf y}^k-{\bf x})\to 0$.}
Then by  estimate  (\ref{kolmogoroff}) we have
$f(\x)-\phi_k(\z^k,\x) \geq \frac{1}{2} \sigma\eta \| \z^k-\x\|$.  Therefore, for $k$ large enough,
$f(\x)-\Phi_k(\z^k,\x) \geq \frac{1}{4}\sigma\eta \| \z^k-\x\|$, as the quadratic term in $\Phi_k$
is of the order $\| \z^k-\x\|^2$.
Since $\z^k\to \x$, by axiom $(\widehat{M}_2)$ there exist
$\epsilon_k\to 0^+$ such that
$f(\z^k) - \phi(\z^k,\x) \leq \epsilon_k \| \z^k-\x \|$. 
%But then also $f(\z^k) - \Phi(\z^k,\x) \leq \widetilde{\epsilon}_k \| \z^k-\x \|$ with $\widetilde{\epsilon}_k = \epsilon_k + \frac{1}{2}(\z^k-\x)^\top Q (\z^k-\x) \to 0$. 
Now we estimate
\[
\widetilde{\rho}_k = \rho_k + \frac{f(\z^k)- \phi(\z^k,\x)}{f(\x)-\Phi_k(\ \z^k,\x)}
\leq \rho_k + \frac{{\epsilon}_k \| \z^k-\x\| }{ \frac{1}{4} \sigma \eta \| \z^k-\x \|} = \rho_k+ 4{\epsilon}_k/(\sigma\eta).
\]
Since ${\epsilon}_k \to 0$ and $\rho_k < \gamma$, we have $\limsup \widetilde{\rho}_k \leq \gamma < \widetilde{\gamma}$,
a contradiction with $\widetilde{\rho}_k > \widetilde{\gamma}$ for the infinitely many $k\in \mathcal K$. That proves $g_k^*\to 0$
for a subsequence $k\in \mathcal N$. 

Next observe that by the subgradient inequality and $\phi_k \leq \phi$ we have
\[
g_k^{*\top} \h \leq \phi_k(\y^k+ \h,\x) - \phi_k(\y^k,\x) \leq \phi(\y^k+\h,\x)-\phi_k(\y^k,\x).
\]
Since $\phi_k(\y^k,\x) \to f(\x)=\phi(\x,\x)$, passing to the limit $k\in \mathcal N$ and using $g_k^*\to 0$, $\y^k\to \x$
implies
\[
0 \leq \phi(\x+\h,\x) - \phi(\x,\x).
\]
Since $\h$ was arbitrary, we have $0\in \partial_1 \phi(\x,\x) \subset \partial f(\x)$ by $(M_1)$. That proves the Lemma.
\hfill $\square$
\end{proof}

This result needs only the fact that $\phi_k \leq \phi$, so it is not in conflict with
any rule in step 6 used to taper out $\phi_k \to \phi_{k+1}$, as long as some exactness plane is present in $\phi_k$ at all inner loop instants $k$. 
In the next  two lemmas we examine the more involved case when $R_k$
is bounded away from 0. Here we require not only that an exactness plane is present at all times, but also
that the latest cutting plane is added into $\phi_{k+1}$.

However, this leads to a tangent program of size growing with $k$,
which raises the question whether it is in principle possible to limit the number  of planes included in the working model $\phi_k(\cdot,\x)$ in step 6. For the convex bundle
method this question is answered in the affirmative by Kiwiel's aggregation rule 
\cite{kiwiel},  according to which only three planes are required, an exactness plane, the
latest cut, and the aggregate plane to account for the past. For the non-convex bundle method, an affirmative answer was first given in \cite{noll}.
The aggregate plane is a convex combination of active cuts at the trial point $\y^k$, and can be described as follows. Had we removed from the
last tangent program all active planes, and substituted instead the aggregate plane, the solution $\y^k$ would have been the same.  
The idea of the aggregation technique is to add the aggregate plane into the working model after an unsuccessful trial step $\y^k$,  which allows to remove
the active planes for the next sweep. Inactive planes may leave the model in any case.

\begin{remark}
{\color{black} Ruszczy\'nski  \cite{rus} stresses that the situation is more delicate for the convex non-smooth trust region method, and asks whether
convergence could be proved under the aggregation rule, or under any other rule allowing to limit the number of
planes included in the $\phi_k$.  Here we address this question in the general non-convex case. In \cite{ANR:2016} we had shown that the number of
cuts in step 6 may at least be limited to $n+2$ using Carath\'eodory's theorem, but we remarked that it would be far more attractive to have a maximum number
independent of the dimension $n$ as in Kiwiel's rule.  In 
\cite[Remark 16]{ANR:2016}
we observed that the question whether Kiwiel's rule could also be justified for the trust-region method was still open. 
Here we shall answer this question in the affirmative by proving convergence under the aggregation rule.} 
\end{remark}

The following result justifies the use of aggregation in the first place for the special case $\z^k=\y^k$.  Note that 
the trivial choice $\z^k=\y^k$ in step 4 is always authorized (due to $M \geq 1$ and $\theta \leq 1$), but
of course we want to use the additional freedom offered by $\z^k$ to improve performance of our method, so $\z^k=\y^k$
is rather restrictive, and we will remove it later.

\begin{lemma}
\label{long}
Suppose the inner loop at $\x$ turns infinitely and the trust-region radius $R_k$ stays bounded away from $0$.
Let $Q \succ 0$ and suppose the aggregation rule is used to taper out the models in step {\rm 6}. Suppose the $\y^k$ are chosen as trial steps.
Then $\x$ is
a critical point of {\rm (\ref{program})}.
\end{lemma}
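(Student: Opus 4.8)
The plan is to show that when the inner loop cycles forever with $R_k \geq \rho > 0$ and $\z^k = \y^k$, the aggregate subgradients $g_k^*$ must tend to zero along a subsequence, and then pass to the limit in the subgradient inequality exactly as in the previous lemma. First I would record what the inner loop being infinite with bounded-away radius gives us: every step is a null step, so $\rho_k < \gamma$ for all $k$, and since $R_{k+1} \neq \frac12 R_k$ can only fail finitely often (otherwise $R_k \to 0$), we actually have $\widetilde\rho_k < \widetilde\gamma$ for all $k$ large. The trial points $\y^k$ live in the compact ball $B(\x, R_1)$, so along a subsequence $\y^k \to \y^*$. The key structural input from the aggregation rule is that at inner loop instant $k+1$ the working model $\phi_{k+1}$ dominates the aggregate plane $m_k^*(\cdot,\x) = \phi_k(\y^k,\x) + g_k^{*\top}(\cdot - \y^k)$ and also the freshly added cutting plane $m_k(\cdot,\x)$ at $\z^k = \y^k$, which since $\z^k = \y^k$ reads $m_k(\cdot,\x) = \phi(\y^k,\x) + g^\top(\cdot-\y^k)$ for some $g \in \partial_1\phi(\y^k,\x)$.

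The heart of the argument is a monotonicity / sufficient-decrease estimate on the quantity $\phi_k(\y^k,\x)$, or rather on the predicted decrease $f(\x) - \Phi_k(\y^k,\x)$. I would compare the model value at consecutive steps: because $\phi_{k+1} \geq m_k^*$ and $\phi_{k+1} \geq m_k$, and because $\y^{k+1}$ minimizes $\Phi_{k+1}$ over $B(\x,R_{k+1})$ while $\y^k$ is feasible there (radii are bounded below, and in fact eventually constant along the tail, or at worst $\y^k$ remains feasible since $R_{k+1} \leq R_k$ forces care — here one uses that after finitely many halvings $R_k$ stabilizes at some $\bar R \geq \rho$, so for large $k$ the feasible set is fixed), one gets that the model values $\Phi_k(\y^k,\x)$ are eventually nondecreasing and bounded above by $f(\x)$, hence convergent. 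Then the standard bundle estimate — using strong convexity of $\Phi_{k+1}$ coming from $Q \succ 0$, i.e. $\Phi_{k+1}(\y^k,\x) \geq \Phi_{k+1}(\y^{k+1},\x) + \frac{\mu}{2}\|\y^k - \y^{k+1}\|^2$ with $\mu$ the smallest eigenvalue of $Q$ — combined with $\Phi_{k+1}(\y^k,\x) \leq m_k^*(\y^k,\x) + \frac12(\y^k-\x)^\top Q (\y^k-\x) = \Phi_k(\y^k,\x)$ forces $\|\y^{k+1} - \y^k\| \to 0$ and then, feeding this back through the cutting-plane inequality $\phi_{k+1}(\y^{k+1},\x) \geq \phi(\y^k,\x) + g^\top(\y^{k+1}-\y^k)$ together with $\widehat{(M_2)}$-type upper bounds on $\phi(\y^k,\x) - \phi_k(\y^k,\x)$, yields that $f(\x) - \phi_k(\y^k,\x) \to 0$.

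Once $\phi_k(\y^k,\x) \to f(\x)$ is in hand, I would feed this into Lemma \ref{est}: since $f(\x) - \phi_k(\z^k,\x) = f(\x) - \phi_k(\y^k,\x) \to 0$ and $\|\z^k - \x\| = \|\y^k-\x\|$ is bounded above and — by Lemma \ref{est} — would be bounded below by a positive constant if $\|g_k^* + Q(\y^k-\x)\|$ stayed bounded away from $0$, we conclude $\liminf_k \|g_k^* + Q(\y^k-\x)\| = 0$; but also $\|\y^k - \x\| \to 0$ cannot be assumed here, so instead one argues $g_k^* + Q(\y^k - \x) \to 0$ along a subsequence directly, and since $g_k^* = -Q(\y^k-\x) - \vv_k$ with $\vv_k$ in the normal cone, combined with the trust-region radius being bounded away from $0$, the constraint is inactive in the limit so $\vv_k \to 0$, giving $g_k^* \to 0$ along a subsequence. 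Then exactly as at the end of the previous lemma: for arbitrary $\h$, $g_k^{*\top}\h \leq \phi(\y^k + \h,\x) - \phi_k(\y^k,\x)$, and passing to the limit using $\phi_k(\y^k,\x)\to f(\x)=\phi(\x,\x)$, $\y^k \to \x$ (which does hold once $f(\x)-\Phi_k(\y^k,\x)\to 0$ forces $\|\y^k-\x\|\to 0$ via $Q\succ 0$), and axiom $(M_3)$ gives $0 \leq \phi(\x+\h,\x)-\phi(\x,\x)$, hence $0 \in \partial_1\phi(\x,\x) \subset \partial f(\x)$.

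I expect the main obstacle to be making rigorous the claim that the model values $f(\x) - \Phi_k(\y^k,\x)$ decrease and hence the aggregate plane does its job: one must carefully track that the aggregate plane $m_k^*$ inserted into $\phi_{k+1}$ guarantees $\Phi_{k+1}(\y^k,\x) \leq \Phi_k(\y^k,\x)$ — this is the defining property of the aggregate plane ("had we substituted the aggregate plane for all active planes, $\y^k$ would be the same solution"), which needs to be stated and used precisely — and that the strong convexity from $Q \succ 0$ then converts the gap $\Phi_k(\y^k,\x) - \Phi_{k+1}(\y^{k+1},\x)$ into control of $\|\y^{k+1}-\y^k\|$. The hypothesis $Q \succ 0$ (as opposed to merely $\succeq 0$) is exactly what makes this work and is presumably why it appears in the statement; the case $\z^k = \y^k$ is the clean one because the newly added cut sits exactly at $\y^k$, so that no separate argument relating $\z^k$ and $\y^k$ is needed — that refinement is deferred, as the remark before the lemma announces.
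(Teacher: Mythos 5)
Your overall roadmap coincides with the paper's: use the aggregate plane to make the model values $\Phi_k(\y^k,\x)$ monotone and convergent, deduce $\y^{k+1}-\y^k\to 0$, feed this through the freshly added cut to get $\phi(\y^k,\x)-\phi_k(\y^k,\x)\to 0$, upgrade to $\phi_k(\y^k,\x)\to f(\x)$, conclude $\y^k\to\x$ and $g_k^*\to 0$, and finish with the subgradient inequality. But the central decrease estimate as you set it up is broken. You assert $\Phi_{k+1}(\y^k,\x)\leq m_k^*(\y^k,\x)+\frac{1}{2}(\y^k-\x)^\top Q(\y^k-\x)=\Phi_k(\y^k,\x)$, whereas the aggregation rule gives the \emph{opposite} inequality: $\phi_{k+1}$ dominates the aggregate plane $m_k^*$, and it also dominates the new cut $m_k$, whose value at $\y^k$ is $\phi(\y^k,\x)\geq\phi_k(\y^k,\x)$, so in fact $\Phi_{k+1}(\y^k,\x)\geq\Phi_k(\y^k,\x)$. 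Without your inequality the strong-convexity estimate $\Phi_{k+1}(\y^k,\x)\geq\Phi_{k+1}(\y^{k+1},\x)+\frac{\mu}{2}\|\y^k-\y^{k+1}\|^2$ cannot be closed: bounding $\Phi_{k+1}(\y^k,\x)$ above by $\Phi_k(\y^k,\x)+o(1)$ would require $\phi(\y^k,\x)-\phi_k(\y^k,\x)\to 0$, which is exactly what you plan to derive \emph{from} $\y^{k+1}-\y^k\to 0$ --- the argument is circular. The same role reversal undermines your justification of monotonicity: minimality of $\y^{k+1}$ for $\Phi_{k+1}$ with $\y^k$ feasible gives $\Phi_{k+1}(\y^{k+1},\x)\leq\Phi_{k+1}(\y^k,\x)$, which points the wrong way. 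The correct mechanism (and the paper's) is that $\y^k$ is the constrained minimizer of the aggregate quadratic $\Phi_k^*=m_k^*+\frac{1}{2}|\cdot-\x|_Q^2$ over the ball, since $\nabla\Phi_k^*(\y^k)=g_k^*+Q(\y^k-\x)=-\vv_k$ with $\vv_k$ in the normal cone; expanding $\Phi_k^*$ at $\y^k$, evaluating at $\y^{k+1}$, and using $\Phi_k^*\leq\Phi_{k+1}$ gives $\Phi_k(\y^k,\x)+\frac{1}{2}|\y^{k+1}-\y^k|_Q^2\leq\Phi_{k+1}(\y^{k+1},\x)$ with no circularity, and both the monotonicity and $\y^{k+1}-\y^k\to 0$ drop out at once.

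The second gap is the passage from $\phi(\y^k,\x)-\phi_k(\y^k,\x)\to 0$ to $\phi_k(\y^k,\x)\to f(\x)$. You attribute it to ``$(\widehat{M}_2)$-type upper bounds'', but $(\widehat{M}_2)$ controls $f-\phi$, not $f-\phi_k$, and nothing in the model axioms forces the working-model values to climb all the way up to $f(\x)$. This step genuinely requires the secondary test: since the radius is frozen, $\widetilde{\rho}_k<\widetilde{\gamma}$ for all large $k$, and because the new cut is exact at $\y^k$ this reads $f(\x)-\phi(\y^k,\x)<\widetilde{\gamma}\left(f(\x)-\Phi_k(\y^k,\x)\right)$; combined with $\phi(\y^k,\x)-\phi_k(\y^k,\x)\to 0$ and $\widetilde{\gamma}<1$, a limit argument excludes $\phi_k(\y^k,\x)\to f(\x)-\eta$ for any $\eta>0$. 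You record $\widetilde{\rho}_k<\widetilde{\gamma}$ at the outset but never use it in the body of the argument; without it the model values could stall strictly below $f(\x)$, $\y^k$ need not converge to $\x$, and the criticality conclusion fails. The remaining steps of your sketch (interiority of $\y^k$ in $B(\x,R)$ forcing $\vv_k=0$ and hence $g_k^*=-Q(\y^k-\x)\to 0$, then the subgradient-inequality limit) are sound and match the paper.
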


\begin{proof}
Since the trust-region radius is frozen $R_k=R_{k_0}$ from some counter $k_0$ onwards, we write $R := R_{k_0}$.
According to step 7 of the algorithm that means $\widetilde{\rho}_k < \widetilde{\gamma}$ for $k\geq k_0$. 
The only progress in the working model
as we update $\phi_k \to \phi_{k+1}$
is now the addition of the cutting plane and the aggregate plane.  {\color{black} The working models $\phi_k$ now contain at least three planes,
an exactness plane, the latest cut from the last unsuccessful trial step, and the aggregate plane. They may  contain more planes, 
but those will not be used in our argument below.}

We want to prove $\y^k\to \x$. Since $R_k$ stays bounded away
from 0, this is more involved than in the previous Lemma.
Since $Q \succ 0$ is fixed, we introduce the Euclidian norm $|\x|_Q^2 = \x^\top Q\x$. With this arrangement the objective function
of the tangent program becomes
\[
\Phi_k(\cdot,\x) = \phi_k(\cdot,\x) + \textstyle\frac{1}{2} | \cdot - \x|_Q^2.
\] 
We know that the cutting plane $m_k(\cdot,\x)$ at  trial step $\y^k$ satisfies $m_k(\y^k,\x) = \phi(\y^k,\x)$,
so it memorizes the value $\phi(\y^k,\x)$, while the aggregate plane $m_k^*(\cdot,\x)$ satisfies
$m_k^*(\y^k,\x) = \phi_k(\y^k,\x)$, so it memorizes in turn the value $\phi_k(\y^k,\x)$. The latter gives
\begin{equation}
\label{what}
\Phi_k(\y^k,\x) = m_k^*(\y^k,\x) + \textstyle \frac{1}{2} |\y^k-\x|_Q^2.
\end{equation}
Now we introduce the quadratic function
\[
\Phi^*_k(\cdot,\x) = m_k^*(\cdot,\x) + \textstyle \frac{1}{2} |\cdot - \x|_Q^2,
\]
then from what we have just seen in (\ref{what})
\begin{equation}
\label{equal}
\Phi_k^*(\y^k,\x) = \Phi_k(\y^k,\x).
\end{equation}
Moreover, we have
\begin{equation}
\label{monotone}
\Phi_k^*(\cdot,\x) \leq \Phi_{k+1}(\cdot,\x),
\end{equation}
because according to the aggregation rule we include the aggregate plane $m_k^*(\cdot,\x)$ in the built of the new model
$\phi_{k+1}$, that is, we have $m_k^*(\cdot,\x) \leq \phi_{k+1}(\cdot,\x)$, and hence (\ref{monotone}).
Expanding the quadratic function $\Phi_k^*(\cdot,\x)$ at $\y^k$ gives
\[
\Phi_k^*(\cdot,\x) = \Phi_k^*(\y^k,\x) + \nabla \Phi_k^*(\y^k,\x)^\top (\cdot-\y^k) +\textstyle \frac{1}{2}| \cdot-\y^k|_Q^2,
\]
where $\nabla \Phi_k^*=g_k^* + Q(\y^k-\x)$. 
From the optimality condition of the tangent program at $\y^k$ we get  $g_k^*+Q(\y^k-\x) = - \vv_k$ with $\vv_k$ in the normal cone to the 
ball $B(\x,R)$ at $\y^k$, hence 
\begin{equation}
\label{just}
\Phi_k^*(\cdot,\x) = \Phi_k^*(\y^k,\x) - \vv_k^\top(\cdot-\y^k) + \textstyle \frac{1}{2} |\cdot - \y^k|_Q^2.
\end{equation}
Now we argue as follows:
\begin{align}
\label{chain}
\begin{split}
\Phi_k(\y^k,\x) &= \Phi_k^*(\y^k,\x)   \hspace*{4cm}   \mbox{ (by (\ref{equal})) } \\
&\leq \Phi_k^*(\y^k,\x) + \textstyle\frac{1}{2} |\y^{k+1}-\y^k|_Q^2 \\
&= \Phi_k^*(\y^{k+1},\x) + \vv_k^\top (\y^{k+1}-\y^k)  \quad \mbox{ (by (\ref{just}))} \\
&\leq \Phi_{k}^*(\y^{k+1},\x)   \hspace*{3.6cm}   \mbox{(since $\vv_k^\top(\y^{k+1}-\y^k) \leq 0$) } \\
&\leq \Phi_{k+1}(\y^{k+1},\x)   \hspace*{3.25cm}   \mbox{(by (\ref{monotone}))} \\
&\leq \Phi_{k+1}(\x,\x)  \qquad\qquad\qquad\qquad\quad\,   \mbox{($\y^{k+1}$ minimizer of $\Phi_{k+1}(\cdot,\x)$)} \\
&= \phi(\x,\x) = f(\x).
\end{split}
\end{align}
Therefore the sequence $\Phi_k(\y^k,\x)$ is increasing and bounded above, and converges to a limit $\Phi^* \leq f(\x)$. Going back with this information to the estimation chain (\ref{chain})
shows $\frac{1}{2}|\y^{k+1}-\y^k|_Q^2 \to 0$ and also $\vv_k^\top (\y^{k+1}-\y^k) \to 0$. Then also
\[
\textstyle \frac{1}{2} |\y^{k+1}-\x|_Q^2 - \frac{1}{2} |\y^k-\x|_Q^2 \to 0,
\]
because $|\cdot|_Q$ is a Euclidian norm. In consequence
\[
\phi_{k+1}(\y^{k+1},\x) - \phi_k(\y^k,\x) = \Phi_{k+1}(\y^{k+1},\x) - \Phi_k(\y^k,\x) -\textstyle \frac{1}{2}|\y^{k+1}-\x|_Q^2+\frac{1}{2} |\y^k-\x|_Q^2 \to 0.
\]
Now recall  that the cutting plane $m_k(\cdot,\x)$ is an affine support function of $\phi_{k+1}(\cdot,\x)$ at $\y^{k}$.
Hence by the subgradient inequality
\[
g_k^\top (\cdot - \y^k) \leq \phi_{k+1}(\cdot,\x) - \phi_{k+1}(\y^k,\x).
\]
Since $\phi_{k+1}(\y^k,\x) = \phi(\y^k,\x)$, we deduce
\begin{equation}
\label{previous}
\phi(\y^k,\x) + g_k^\top(\cdot-\y^k)\leq \phi_{k+1}(\cdot,\x).
\end{equation}
Now using (\ref{previous}) we estimate as follows:
\begin{align*}
0 &\leq \phi(\y^k,\x) - \phi_k(\y^k,\x) \\
&= \phi(\y^k,\x) + g_k^\top(\y^{k+1}-\y^k) - \phi_k(\y^k,\x) - g_k^\top(\y^{k+1}-\y^k)\\
&\leq \phi_{k+1}(\y^{k+1},\x) - \phi_k(\y^k,\x) - g_k^\top(\y^{k+1}-\y^k).
\end{align*}
Since $\y^{k+1}-\y^k\to 0$ and the $g_k$ are bounded, we have $g_k^\top(\y^{k+1}-\y^k)\to 0$, hence
we deduce $\phi(\y^{k},\x)-\phi_k(\y^k,\x) \to 0$, and also $\Phi(\y^k,\x) - \Phi_k(\y^k,\x) \to 0$.

Now we claim that  $\phi_k(\y^k,\x)\to f(\x)$. Since
$\phi_k(\y^k,\x)\leq \Phi_k(\y^k,\x) \to \Phi^* \leq f(\x)$, it remains to prove $\liminf \phi_k(\y^k,\x) \geq f(\x)$.
Suppose that this is not the case, and let $\phi_k(\y^k,\x) \to f(\x) - \eta$ for a subsequence and some $\eta > 0$.
Then also $\phi(\y^k,\x) \to f(\x)-\eta$ for that subsequence. {\color{black} (Here we use that
$\phi_k(\y^k,\x)-\phi(\y^k,\x)\to 0$ proved above).} Passing to yet another subsequence,  and using boundedness of the $\y^k$, we may assume
$\frac{1}{2}|\y^k-\x|_Q^2 \to \ell \geq 0$.
Choose $\delta > 0$ such that $\delta < (1-\widetilde{\gamma})\eta$. From what we have just seen there exists $k_1$ such that
\[
\phi(\y^k,\x) - \phi_k(\y^k,\x) < \delta
\]
for all $k \geq k_1$. Now recall that $\widetilde{\rho}_k \leq \widetilde{\gamma}$ for every $k\geq k_0$, hence
\[
\widetilde{\gamma} \left(  \Phi_k(\y^k,\x) - f(\x) \right)
\leq \phi(\y^k,\x) - f(\x)
\leq \phi_k(\y^k,\x)-f(\x) + \delta 
. 
\]
Passing to the limit 
gives $-\widetilde{\gamma} \eta + \widetilde{\gamma} \ell \leq -\eta +  \delta$, hence $(1-\widetilde{\gamma})\eta + \ell \widetilde{\gamma} \leq \delta$,
which contradicts the choice of $\delta$.
Hence $\phi_k(\y^k,\x) \to f(\x)$.  We immediately deduce that $\Phi_k(\y^k,\x)\to f(\x)$ and $\Phi(\y^k,\x)\to f(\x)$.

We now argue that $\y^k\to \x$. This follows from the estimates
\[
\phi_k(\y^k,\x) \leq
\Phi_k(\y^k,\x) =\phi_k(\y^k,\x) + \textstyle\frac{1}{2}|\y^k-\x|_Q^2 \leq \Phi^* \leq f(\x)
\]
because $\phi_k(\y^k,\x)\to f(\x)$ now shows that all terms go to $f(\x)$, and that implies $|\y^k-\x|_Q \to 0$.
Since $Q \succ 0$ we deduce $\y^k\to \x$, and this is where the proof no longer works if only $Q \succeq 0$. Note that this shows that $\y^k$ is in the interior of $B(\x,R)$
from some counter onward, so that $\vv_k=0$.

Let us now show that $0\in \partial f(\x)$. From the subgradient inequality we have
\[
g_k^{*\top}(\x+\h - \y^k) \leq \phi_k(\x+\h,\x) - \phi_k(\y^k,\x) \leq \phi(\x+\h,\x) - \phi_k(\y^k,\x).
\]
Passing to the limit using $g_k^*\to 0$, $\phi_k(\y^k,\x) \to f(\x) = \phi(\x,\x)$, we obtain
\[
0 \leq \phi(\x+\h,\x) - \phi(\x,\x),
\]
and since $\h$ is arbitrary and $\phi(\cdot,\x)$ is convex, this gives $0 \in \partial_1 \phi(\x,\x) \subset \partial f(\x).$ 
\hfill $\square$
\end{proof}

\begin{remark}
For the general case $Q \succeq 0$ and $\z^k$
different from $\y^k$ it is still not known whether aggregation is justified, but
with \cite[Lemma 2]{ANR:2016} we can prove convergence if we keep all
cuts in the model, or if we use the Carath\'eodory type argument of that reference
to limit the number of cutting planes in the inner loop to $\leq n+2$. 
\end{remark}

\begin{remark}
Note that $Q \succ 0$ is not a restriction  in practice, but $\z^k=\y^k$ is. Fortunately the aggregation technique may still be justified in the case
$\z^k\not=\y^k$
if we proceed as follows. %We use $Q\succ 0$,  which is beneficial anyway, as it has a stabilizing effect on the solution $\y^k$ of the tangent program. and in 
In the first place we
allow $\z^k$ as a trial point in  step 4. If acceptance fails,
then we perform step 7. However, if step 7 gives no reduction of $R_k$, then we are in the difficult case.
We then do the following. We fall back on $\y^k$
as the trial point, i.e., we forget about $\z^k$. 
When $\y^k$ is not accepted, we proceed with step 6 and apply aggregation.  This is now justified
because we are  in the situation covered by Lemma \ref{long}. Note that the additional work required in steps 6 and 7 is marginal, so we do not 
waste time by this  evasive maneuver.  We could even perform this maneuver as default (i.e. checking $\y^k$ if $\z^k$ fails). 
We have proved the following
\end{remark}

\begin{lemma}
Suppose the inner loop turns infinitely.
Suppose $Q \succ 0$ and that the aggregation rule is used in step {\rm 6} to limit the size of $\phi_k$ to any pre-defined fixed number $N \geq 3$.
Suppose we accept to fall back on $\y^k$ if $\z^k$ fails in step {\rm 5} with $\widetilde{\rho}_k < \widetilde{\gamma}$ in step {\rm 7}.
 Then $0\in \partial f(\x)$.
 \hfill $\square$
\end{lemma}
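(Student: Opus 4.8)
The plan is to reduce the general case with $\z^k \ne \y^k$ to the situation already handled in Lemma~\ref{long}, exploiting the evasive maneuver described in the preceding remark, and then to deal with the two structurally different regimes of the inner loop separately, exactly as in the earlier lemmas.

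First I would split according to the behavior of the trust-region radius $R_k$ along the infinitely turning inner loop. If $\liminf_{k\to\infty} R_k = 0$, then the second lemma of the section (the one proved from $\widetilde\rho_k \geq \widetilde\gamma$ infinitely often, forcing $R_k \to 0$) already gives $0 \in \partial f(\x)$; that argument uses only $\phi_k \leq \phi$, the presence of an exactness plane, and $(\widehat M_2)$, all of which remain in force here, and in particular it is insensitive to whether we kept $\z^k$ or fell back on $\y^k$. So the only case requiring new work is when $R_k$ stays bounded away from $0$.

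In that case I would invoke the evasive maneuver. By step~7, $\widetilde\rho_k < \widetilde\gamma$ for all $k \geq k_0$, so on each such sweep the radius is not reduced, which is precisely the trigger for falling back on $\y^k$ as the trial point and then applying aggregation in step~6 when $\y^k$ is not accepted. From counter $k_0$ onward the algorithm therefore behaves exactly as in Lemma~\ref{long}: the serious step never fires (the inner loop turns infinitely), the trial steps are the $\y^k$, $Q = Q_j \succ 0$ is fixed, and the working models $\phi_k$ always contain an exactness plane, the latest cut, and the aggregate plane, i.e. they are bounded in size by the prescribed $N \geq 3$. Lemma~\ref{long} then applies verbatim to conclude $0 \in \partial f(\x)$. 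The finitely many initial sweeps before $k_0$ are irrelevant since criticality is a statement about the tail.

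The only genuinely delicate point — the place where one must be careful rather than merely cite — is verifying that, once the fallback is triggered, the sequence of models and trial steps really does satisfy all the hypotheses of Lemma~\ref{long} \emph{from some counter onward}, not just eventually in an interleaved fashion: one needs that on every sweep $k \geq k_0$ the trial point actually used is $\y^k$ (so that the chain~(\ref{chain}) and the identities (\ref{equal})--(\ref{monotone}) are available at every such $k$) and that the aggregate plane $m_k^*(\cdot,\x)$ built from that sweep is the one carried into $\phi_{k+1}$. Since the remark specifies that the fallback is performed precisely when $\widetilde\rho_k < \widetilde\gamma$ in step~7 — which is exactly the condition that holds for all $k \geq k_0$ in the bounded-radius case — this consistency is automatic, and no further estimate beyond those already in Lemma~\ref{long} is needed. $\hfill\square$
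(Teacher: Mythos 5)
Your proposal is correct and follows essentially the same route as the paper: the paper itself obtains this lemma as an immediate consequence of the preceding remark (the fallback maneuver) combined with the earlier lemma for the case $\liminf_{k\to\infty}R_k=0$ and Lemma \ref{long} for the case where $R_k$ stays bounded away from zero, which is exactly your case split. Your added check that the fallback is triggered on every sweep from $k_0$ onward (because $\widetilde\rho_k<\widetilde\gamma$ there) is the same consistency observation the paper relies on implicitly.
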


\begin{remark}
In the classical trust-region method failure of the trial step always
leads to reduction of the trust-region radius. One occasionally sees non-smooth versions where authors
do the same. As we have already shown in \cite[section 5.5]{ANR:2016}, that must fail. The example in that reference also shows that
the Cauchy point fails in the non-smooth case. 
\end{remark}

We are now ready to state the main convergence result for our optimization method. The  proof may be adapted from \cite{ANR:2016} with minor changes,
so we skip it here.

\begin{theorem}
Suppose $\x^1$ is such that the level set $\{\x\in \mathbb R^n: f(\x) \leq f(\x^1)\}$ is bounded.  Let $\x^j$ be the sequence of serious
iterates generated by the bundle trust-region algorithm. Then every accumulation point $\x^*$ of the $\x^j$ is a critical point of 
{\rm (\ref{program})}.
\hfill $\square$
\end{theorem}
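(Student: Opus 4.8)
The plan is to combine the three cases already treated in the lemmas above into a single convergence statement for the sequence of serious iterates. First I would observe that the boundedness assumption on the sublevel set $\{\x : f(\x) \leq f(\x^1)\}$ guarantees that the $\x^j$ remain in a compact set, since the acceptance test in step 5 enforces $f(\x^{j+1}) = f(\z^k) < f(\x^j)$ at every serious step (indeed $\rho_k \geq \gamma > 0$ forces a strict decrease whenever the model predicts positive progress, which it does unless $\x^j$ is already critical). Hence accumulation points $\x^*$ exist, and the monotone decreasing sequence $f(\x^j)$ converges to some limit $f^*$.

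Next I would argue that the inner loop terminates finitely at every non-critical serious iterate: if $0 \notin \partial f(\x^j)$, then by the previous lemmas the inner loop cannot turn infinitely --- whether $\liminf_k R_k = 0$ (second lemma) or $R_k$ stays bounded away from zero (Lemma \ref{long}, using $Q \succ 0$ and the fall-back on $\y^k$), the conclusion would be that $\x^j$ is critical, contradicting the assumption. So at a non-critical $\x^j$ a finite number of null steps produces an acceptable trial step $\z^k$ with $\rho_k \geq \gamma$, and the algorithm takes a genuine serious step. Combining finiteness of the inner loop with the strict decrease of $f$, the algorithm either stops finitely at a critical point (step 2) or generates an infinite sequence of serious iterates.

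For the infinite case, let $\x^*$ be an accumulation point, say $\x^{j} \to \x^*$ along a subsequence. The key quantity is the predicted decrease $\delta_j := f(\x^j) - \Phi_{k(j)}(\z^{k(j)}, \x^j)$ realized at the serious step leaving $\x^j$. Since $f(\x^j) - f(\x^{j+1}) = \rho_{k(j)} \delta_j \geq \gamma \delta_j \geq 0$ and $\sum_j \big(f(\x^j)-f(\x^{j+1})\big) = f(\x^1) - f^* < \infty$, we get $\delta_j \to 0$. Now the trial-step condition in step 4 gives $\delta_j \geq \theta\big(f(\x^j) - \Phi_{k(j)}(\y^{k(j)},\x^j)\big) \geq 0$, so the predicted decrease at the tangent solution $\y^{k(j)}$ also vanishes; feeding this into Lemma \ref{est} (estimate (\ref{kolmogoroff})), together with control of the trust-region radius and the fact that at least an exactness plane sits in each $\phi_{k(j)}$, forces the aggregate subgradient $g^*_{k(j)} + Q_j(\y^{k(j)} - \x^j)$ and the displacement $\y^{k(j)} - \x^j$ to be suitably small along the subsequence. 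Passing to the limit in the subgradient inequality $g_{k(j)}^{*\top}\h \leq \phi_{k(j)}(\y^{k(j)}+\h,\x^j) - \phi_{k(j)}(\y^{k(j)},\x^j) \leq \phi(\y^{k(j)}+\h,\x^j) - \phi_{k(j)}(\y^{k(j)},\x^j)$, and using the exactness plane to pin down $\phi_{k(j)}(\y^{k(j)},\x^j) \to f(\x^*)$ together with axiom $(M_3)$ and boundedness of the $Q_j$ by $q$, yields $0 \leq \phi(\x^* + \h, \x^*) - \phi(\x^*,\x^*)$ for all $\h$, hence $0 \in \partial_1\phi(\x^*,\x^*) \subset \partial f(\x^*)$ by $(M_1)$.

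The main obstacle I anticipate is the bookkeeping across the transition from the inner-loop analysis to the outer-loop analysis: in the inner-loop lemmas $\x$ and $Q$ are frozen, whereas here $\x^j$ varies and $Q_j$ changes, so one must carefully exploit the \emph{strict} model axiom $(\widehat{M}_2)$ (rather than $(M_2)$) to handle the case $\x^j \to \x^*$ with $\x^j \neq \x^*$, and one must verify that the predicted-decrease-to-zero argument is not spoiled by serious steps whose inner loop took many iterations --- this is where the uniform constants $\theta, M, \sigma, q$ and the memory-radius update rule in step 8 (which prevents the radius from collapsing across serious steps) do the real work. Since all of these pieces appear in \cite{ANR:2016} and the present lemmas only adds the aggregation refinement without altering this outer-loop skeleton, the adaptation is routine, which is why we omit the details.
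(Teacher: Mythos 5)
The paper itself gives no proof of this theorem: it states only that ``the proof may be adapted from \cite{ANR:2016} with minor changes'' and skips it. Your outline follows exactly the route taken in that reference --- finite termination of the inner loop at non-critical iterates via the preceding lemmas, monotone decrease plus summability of $f(\x^j)-f(\x^{j+1})$ forcing the predicted decrease $\delta_j\to 0$, and then the Kolmogorov-type estimate (\ref{kolmogoroff}) together with axioms $(M_1)$, $(\widehat M_2)$, $(M_3)$ to certify criticality of accumulation points --- so in approach you are aligned with what the paper intends.

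One step is asserted too quickly, and it is the only genuinely delicate point of the outer-loop argument: estimate (\ref{kolmogoroff}) bounds the \emph{product} $\|g^*_{k(j)}+Q_j(\y^{k(j)}-\x^j)\|\,\|\z^{k(j)}-\x^j\|$ from above by $\bigl(f(\x^j)-\phi_{k(j)}(\z^{k(j)},\x^j)\bigr)/\sigma$, and knowing $\delta_j\to 0$ does not by itself make both factors small --- indeed $f(\x^j)-\phi_{k(j)}(\z^{k(j)},\x^j)=\delta_j+\tfrac12(\z^{k(j)}-\x^j)^\top Q_j(\z^{k(j)}-\x^j)$, so the right-hand side need not even tend to zero unless one first controls $\|\z^{k(j)}-\x^j\|$. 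The actual argument in \cite{ANR:2016} runs by contradiction: one assumes the aggregate subgradients stay bounded away from zero along the subsequence, uses (\ref{kolmogoroff}) to get $\delta_j\gtrsim\|\x^{j+1}-\x^j\|$, concludes $\|\x^{j+1}-\x^j\|\to 0$, and only then passes to the limit in the subgradient inequality using $(\widehat M_2)$, $(M_3)$ and $\|Q_j\|\le q$ to reach a contradiction. You have flagged that the constants and the memory-radius rule ``do the real work'' here and deferred to the reference, which is legitimate and mirrors the paper; just be aware that the phrase ``forces the aggregate subgradient and the displacement to be suitably small'' conceals a case distinction rather than a direct implication.
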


\section{The case of $H_\infty$-synthesis}
\label{application}
The general purpose
algorithm \ref{algo1} is readily applicable to the $H_\infty$-design problem (\ref{hinfstruct}), as this is 
a special case of (\ref{program}).  Since $f$ is the square-root of a maximum eigenvalue function,
the ideal model $\phi$ in algorithm \ref{algo1} is chosen as in remark \ref{max_eig}. 
There are, however,  some particularities in the application of algorithm \ref{algo1} to (\ref{hinfstruct}),  on which we comment in this section.

\subsection{Stability barrier}
\label{barrier}
The hidden constraint of closed-loop stability may occasionally 
lead to unacceptable trial points $\y^k$, but this can be avoided by complementing the objective
in (\ref{hinfstruct}) by the following barrier function:
$f(\x) = \max\{\|T_{wz}(K(\x))\|_{\infty,d},c\|S(K(\x))\|_{\infty,d}\}$, where $S = (I+GK)^{-1}$ is the closed-loop sensitivity function,
and $c>0$ a small constant. It is well-known that $\|S\|_\infty^{-1}$, also known as the modulus margin,  is an indicator for the distance of the Nyquist curve
to the point  $-1$, and since the Nyquist curve crosses $-1$ when iterates become destabilizing, 
the term $\|S\|_\infty$ becomes large as iterates approach the limit of the region of stability. In other words,
$c\|S\|_\infty$ has the effect of a barrier function at the boundary of the hidden constraint. Note that the maximum
of two $H_\infty$-norms is again a $H_\infty$-norm, i.e., we may still represent the modified
objective as $f(\x) = \|T_{w',z'}(K)\|_{\infty,d}$ for a modified channel $w' \to z'$.  Note that other closed-loop transfer function, which
may contribute a similar complementary  barrier effect include $GS = G(I+GK)^{-1}$, $(I-KG)^{-1}$ and $K(I+KG)^{-1}$, see e.g. \cite[p.36]{barratt}.

\subsection{Exploiting freedom in steps 3 and 4}
An important practical aspect of program (\ref{hinfstruct}) is to use an adapted initialization
of the model $\phi_1(\cdot,\x)$ at the beginning of the inner loop, the idea being that in the vast majority of cases the first trial step will then be successful.
This is achieved by including not only active frequencies $\omega$ from  $T_{wz}(K(\x),j\omega)$ at $\x$ in the model, but also branches belonging to secondary peaks,
which are susceptible to become active at the next trial step.  Selecting near active frequencies of
the $H_\infty$-norm is decisive for the quality of the working models $\phi_k$ and was already discussed in \cite{tac} and \cite{ANP:2009} in the context of the bundle method,
and this is shown schematically in Figure \ref{fig-vartheta}. We refer to this type of affine functions as anticipated cutting
planes, and their integration into the working models is covered by convergence theory as long as they are
affine minorants of $\phi$.

\begin{figure}[ht!]
\begin{center}
\includegraphics[scale=0.35]{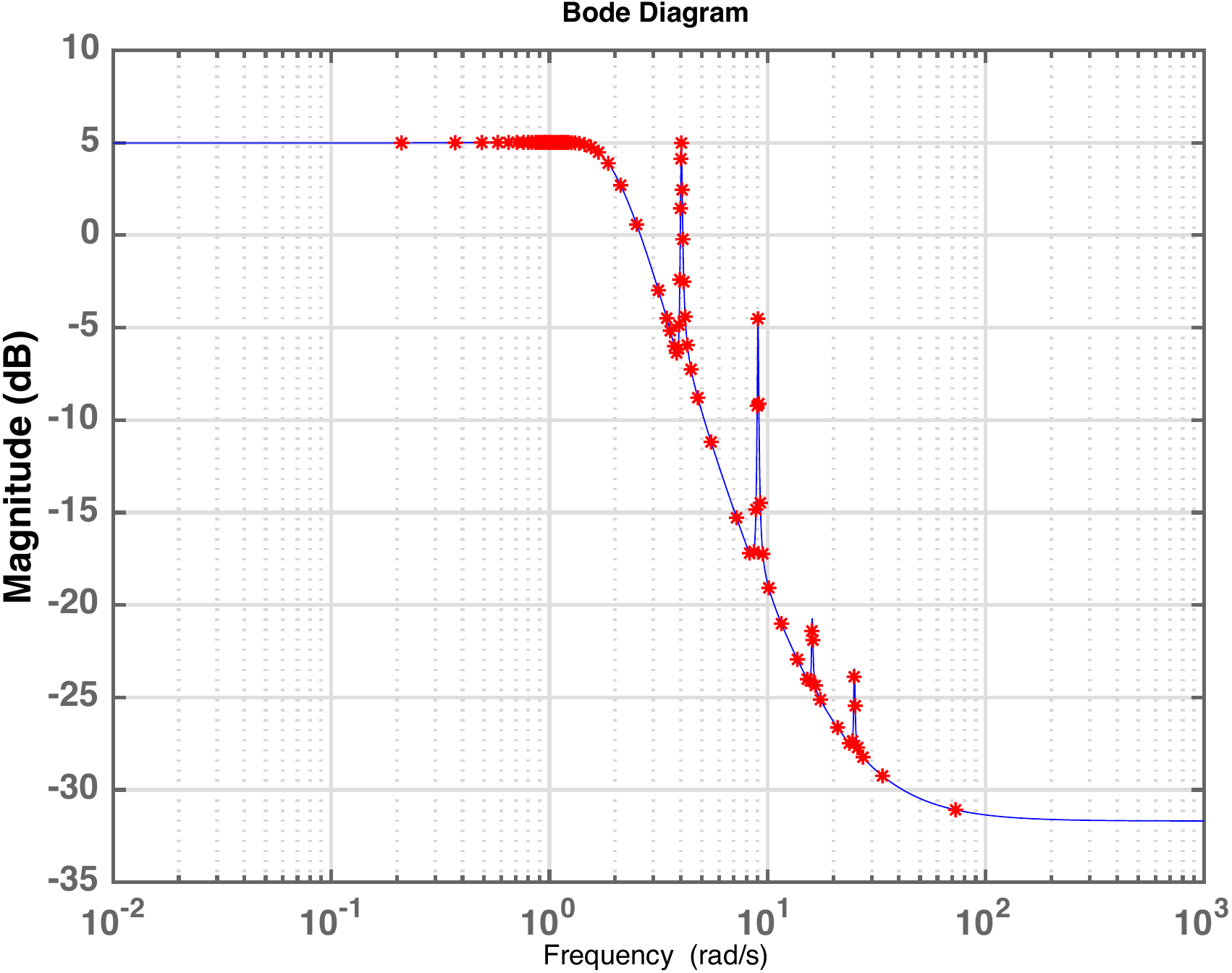}
$\qquad$\includegraphics[scale=0.7]{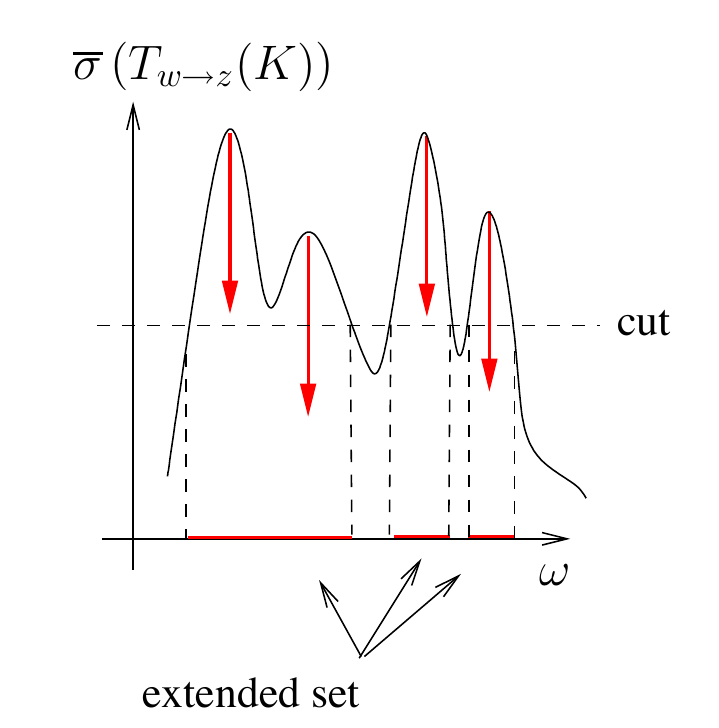}
\end{center}
\caption{\small Left: grid $\Omega_{\rm opt}$ based on the criterion (\ref{var}). Right:  selection of extended set of frequencies around active
frequencies with primary and secondary peaks (i.e., global and local maxima of $\omega\mapsto \overline{\sigma}(T_{wz}(j\omega)$).  \label{fig-vartheta}}
\end{figure}

\begin{remark}
We next exploit the freedom in step
4 of algorithm \ref{algo1} in the case of program (\ref{hinfstruct}).  In case of failure of the solution $\y^k$ of the tangent program
it is attractive to use a backtracking linesearch to  generate trial steps of the form $\z^k = \x + t(\y^k-\x)$, $0 < t < 1$. 
By convexity, $\Phi_k(\x,\x) - \Phi_k(\x+t(\y^k-\x),\x)
\geq t \left(  \Phi_k(\x,\x) - \Phi_k(\y^k,\x)\right)$, hence every $\z^k$ with $t\geq \theta$ gives automatically a trial point in the sense of step 4, 
and we only have to check acceptance $\rho_k\geq \gamma$.
Even for smaller steps
$t < \theta$ it may still be possible to have  $f(\x^j)-\Phi_k(\z^k,\x^j) \geq
\theta \left( f(\x^j) - \Phi_k(\y^k,\x^j) \right)$, in which case $\z^k$ remains a candidate. 
\end{remark}

\subsection{Performance certificate}
\label{certificate}
The strategy in algorithm \ref{algo3} is to perform discretization
at the level of the system transfer function $G(s)$, and not before, avoiding system reduction or identification.
To justify this we have to select a discretization $\Omega_{\rm opt}$ for optimization (\ref{hinfstruct})
such that the optimal value $f(\x^*)=\|T_{wz}(K(\x^*))\|_{\infty,d}$ is not too far from the true infinite-dimensional value
$f_\infty(\x^*)=\|T_{wz}(K(\x^*))\|_\infty$. This hinges on a suitable grid generation technique. This is crucial in our approach, but we stress
that this takes place in a low-dimensional space, whereas system reduction and identification need heavy large scale linear algebra 
machinery, and yet remain on the level of heuristics.

Given a continuously differentiable function $\phi:[0,\infty] \to \mathbb R_+$, we    want to construct a finite grid
$\Omega_{\rm opt}$ such that $\max_{\omega \in [0,\infty]} | \phi(\omega) - P_\phi(\omega) | \leq \vartheta$ for a fixed tolerance $\vartheta$, where $P_\phi$
is the piecewise linear function interpolating $\{\phi(\omega): \omega \in \Omega_{\rm opt}\}$. 
We call $M[\cdot,\cdot]$ a first-order bound of $\phi: \mathbb R \to \mathbb R$ if
$|\phi'(\omega)| \leq M[\omega^-,\omega^+]$ for all $\omega^- < \omega^+$ and all $\omega \in [\omega^-,\omega^+]$.
We need the following preparatory:

\begin{lemma}
\label{katz}
Suppose we have constructed a grid $\Omega$ on $[0,\infty]$ such that for two consecutive nodes
$\omega_i,\omega_{i+1}\in \Omega$
and some $\gamma^* \geq \max\{\phi(\omega_i),\phi(\omega_{i+1})  \}$  the inequality
\begin{equation}
\label{var}
M[\omega_i,\omega_{i+1}] (\omega_{i+1}-\omega_i) < 2 \gamma^* + 2 \vartheta - \phi(\omega_i) - \phi(\omega_{i+1}).
\end{equation}
is satisfied. 
Then $\phi(\omega) < \gamma^* + \vartheta$ for every $\omega\in [\omega_i,\omega_{i+1}]$.
\end{lemma}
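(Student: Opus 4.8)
The plan is a two-sided mean value estimate on the cell $[\omega_i,\omega_{i+1}]$, the two halves being combined additively. First, fix $\omega\in[\omega_i,\omega_{i+1}]$. Since $\phi$ is $C^1$, the fundamental theorem of calculus gives $\phi(\omega)=\phi(\omega_i)+\int_{\omega_i}^{\omega}\phi'(t)\,dt$, and bounding $|\phi'(t)|\le M[\omega_i,\omega_{i+1}]$ on the whole cell yields $\phi(\omega)\le\phi(\omega_i)+M[\omega_i,\omega_{i+1}](\omega-\omega_i)$. Writing instead $\phi(\omega)=\phi(\omega_{i+1})-\int_{\omega}^{\omega_{i+1}}\phi'(t)\,dt$ and using the same bound gives, symmetrically, $\phi(\omega)\le\phi(\omega_{i+1})+M[\omega_i,\omega_{i+1}](\omega_{i+1}-\omega)$.

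Next I would add the two inequalities. The two $M$-terms sum to $M[\omega_i,\omega_{i+1}](\omega_{i+1}-\omega_i)$, which no longer depends on $\omega$, so $2\phi(\omega)\le\phi(\omega_i)+\phi(\omega_{i+1})+M[\omega_i,\omega_{i+1}](\omega_{i+1}-\omega_i)$. Now I apply hypothesis (\ref{var}): the last summand is strictly smaller than $2\gamma^*+2\vartheta-\phi(\omega_i)-\phi(\omega_{i+1})$, so the right-hand side is strictly less than $2\gamma^*+2\vartheta$. Dividing by $2$ gives $\phi(\omega)<\gamma^*+\vartheta$, which is the assertion. Note in passing that the side condition $\gamma^*\ge\max\{\phi(\omega_i),\phi(\omega_{i+1})\}$ is not actually needed in this argument; it only serves to keep the right-hand side of (\ref{var}) above $2\vartheta$, which is what guarantees that a finite grid with the required property exists.

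I do not expect a genuine obstacle here: the estimate is elementary. The only point deserving a word of care is that the statement lives on the compactified half-line $[0,\infty]$, so for the unbounded terminal cell the estimate should be read after the usual reparametrization $\omega\mapsto\omega/(1+\omega)$, with $M[\cdot,\cdot]$ interpreted as a first-order bound in the transformed variable; on every bounded cell the computation above applies verbatim.
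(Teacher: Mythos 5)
Your proof is correct, and it takes a genuinely different and more elementary route than the paper. The paper argues by contradiction with a length comparison: assuming $\phi(\omega^*)\geq \gamma^*+\vartheta$ at some interior point, it bounds from below the length of the polygon through $(\omega_i,\phi(\omega_i))$, $(\omega^*,\phi(\omega^*))$, $(\omega_{i+1},\phi(\omega_{i+1}))$, minimizes that lower bound over the position of $\omega^*$, compares with the arc length $\int\sqrt{1+\phi'^2}\leq \sqrt{1+M^2}\,(\omega_{i+1}-\omega_i)$, and extracts $M[\omega_i,\omega_{i+1}]\geq (A+B)/(\omega_{i+1}-\omega_i)$, contradicting (\ref{var}). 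Your two one-sided fundamental-theorem estimates, added together, reach exactly the same inequality $2\phi(\omega)\leq \phi(\omega_i)+\phi(\omega_{i+1})+M[\omega_i,\omega_{i+1}](\omega_{i+1}-\omega_i)$ directly, with the same sharp constant, and without the arc-length machinery. A genuine dividend of your version is the observation that the hypothesis $\gamma^*\geq\max\{\phi(\omega_i),\phi(\omega_{i+1})\}$ is not needed for the implication itself: the paper's proof does use it (to ensure the vertical offsets $A,B$ are nonnegative so that the polygon-length lower bound is valid), whereas in your argument it only plays the role you identify, namely keeping the right-hand side of (\ref{var}) positive so that the grid-construction rule (\ref{rule}) is satisfiable. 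Your remark about the terminal unbounded cell is a reasonable reading and applies equally to the paper's proof, which is silent on the point.
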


\begin{proof}
Suppose on the contrary that there exists $\omega^* \in [\omega_i,\omega_{i+1}]$ such that $\phi(\omega^*) \geq \gamma^* + \vartheta$. Then the polygon
through $\phi(\omega_i)$, $\phi(\omega^*)$, $\phi(\omega_{i+1})$ has length greater than or equal to $L$, where
\[
L = \sqrt{A^2 + (\omega^*-\omega_i)^2} + \sqrt{B^2+(\omega_{i+1}-\omega^*)^2}
\]
with $A = \gamma^*+\vartheta - \phi(\omega_i)$ and $B=\gamma^*+ \vartheta - \phi(\omega_{i+1})$. Now $L \geq \ell$, where
$$
\ell := \min_{\omega \in [\omega_i,\omega_{i+1}]}  \sqrt{A^2 + (\omega-\omega_i)^2} + \sqrt{B^2+(\omega_{i+1}-\omega)^2}
= \sqrt{(A+B)^2 + (\omega_{i+1}-\omega_i)^2},
$$ 
the minimum being attained at $\omega = \frac{\omega_i B + \omega_{i+1}A}{A+B}$.
On the other hand the
curve $\{(\omega,\phi(\omega)): \omega \in [\omega_i,\omega_{i+1}]\}$ has length
$\mathscr L = \int_{\omega_i}^{\omega_{i+1}}\sqrt{1+\phi'(\omega)^2}d\omega \leq \sqrt{1+M[\omega_i,\omega_{i+1}]^2} (\omega_{i+1}-\omega_i)$, and
$\mathscr L \geq L$, so in combining the two estimates we get $\mathscr L \geq \ell$, which yields the estimate
$\sqrt{1+M[\omega_i,\omega_{i+1}]^2} \geq \sqrt{{(A+B)^2}/{(\omega_{i+1}-\omega_i)^2}+1}$.   We deduce $M[\omega_i,\omega_{i+1}] \geq (A+B)/(\omega_{i+1}-\omega_i)$,
and since $A+B=2\gamma^*+2\vartheta - \phi(\omega_i)-\phi(\omega_{i+1})$, this contradicts (\ref{var}).
\hfill $\square$
\end{proof}

%We consider a twice differentiable function $\phi:[0,\infty] \to \mathbb C$.   We  want to construct a finite grid
%$\Omega_{\rm opt}$ such that $\max_{\omega \in [0,\infty]} | \phi(\omega) - P_\phi(\omega) | \leq \vartheta$, where $P_\phi$
%is the linearly interpolating polygon of $\{\phi(\omega): \omega \in \Omega_{\rm opt}\}$. We call $M[\cdot,\cdot]$ a second-order bound of $\phi$, if
%$|\phi''(\omega)| \leq M[\omega_1,\omega_2]$ for all $0 \leq \omega_1 < \omega_2 \leq \infty$ and $\omega\in [\omega_1,\omega_2]$. 

%We have the following

%\begin{lemma}
%\label{katz}
%Suppose we have constructed a grid $\Omega$ on $[0,\infty]$ such that for all $\omega_i\in \Omega$
%\begin{equation}
%\label{vartheta}
%M[\omega_i,\omega_{i+1}](\omega_{i+1}-\omega_i)^2 < 8 \vartheta.
%\end{equation}
%Then $|\phi(\omega)-P_\phi(\omega)| < \vartheta$ for all $\omega\in [0,\infty]$.
%\end{lemma}

%\begin{proof}
%By the Lagrange interpolation formula we have
%$\phi(\omega)-P_\phi(\omega)= \phi''(\tilde{\omega}) (\omega-\omega_i)(\omega-\omega_{i+1})/2$ for every $\omega\in [\omega_i,\omega_{i+1}]$ and some
%$\tilde{\omega} \in [\omega_i,\omega_{i+1}]$ depending on $\omega$. Therefore 
%$\displaystyle\max_{\omega\in [\omega_i,\omega_{i+1}]}|\phi(\omega) - P_\phi(\omega) |\leq \max_{\omega\in [\omega_i,\omega_{i+1}]} |\phi''(\omega)/2|
%|(\omega-\omega_i)(\omega-\omega_{i+1})| \leq M[\omega_i,\omega_{i+1}]/2 (\omega_{i+1}-\omega_i)^2/4 < \vartheta$.
%\hfill $\square$
%\end{proof}

We would now like to apply this to the function
$\phi(\omega) = \overline{\sigma}\left( T_{wz}(K^*,j\omega)\right)$, where $K^*=K(\x^*)$ is the optimal $H_\infty$-controller
computed by algorithm \ref{algo1}. For that we have to prove differentiability of $\phi$. We have the following:

\begin{lemma}
\label{boyd}
{\rm \cite[Theorem 2.3]{boyd2}}  The function
$\phi$ has only a finite number of points of non-smoothness, and in particular, is of class $C^2$ in the neighborhood of all
primary and secondary peaks  (all global and local maxima).
\end{lemma}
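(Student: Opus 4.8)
The plan is to analyze $\phi(\omega) = \overline{\sigma}\left( T_{wz}(K^*,j\omega)\right)$ as the square root of the largest eigenvalue $\lambda_1\left( H(\omega) \right)$ of the Hermitian matrix $H(\omega) = T_{wz}(K^*,j\omega)^* T_{wz}(K^*,j\omega)$. Since $K^*$ is a fixed finite-dimensional controller and $G$ has a meromorphic transfer function with values in $\mathbb C^p\times\mathbb C^m$ (as recorded in section \ref{strategy}), the closed-loop transfer function $T_{wz}(K^*,s)$ is a rational (meromorphic) matrix-valued function of $s$, hence $H(\omega)$ is a real-analytic matrix-valued function of $\omega$ away from the finitely many poles on the imaginary axis. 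For a stabilizing $K^*$ there are no such poles on $j\mathbb R$, so $H(\omega)$ is real-analytic on all of $[0,\infty]$ (including $\omega=\infty$ after the usual change of variable, using properness of $T_{wz}$).

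The next step is to invoke the classical perturbation theory of eigenvalues of analytic Hermitian families (Rellich / Kato): along a real-analytic Hermitian curve $\omega\mapsto H(\omega)$, the eigenvalues can be chosen to be real-analytic functions $\ld_1(\omega),\dots,\ld_r(\omega)$ of $\omega$, each globally analytic, although their ordering by magnitude may cross. The function $\phi(\omega)^2 = \max_i \ld_i(\omega)$ is then the pointwise maximum of finitely many real-analytic functions. Two real-analytic functions on a connected interval either coincide identically or agree only on a set with no accumulation point; consequently the "upper envelope" $\max_i \ld_i$ is analytic except at the finitely many crossing points where a switch between branches occurs, and these are isolated. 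This gives the finite number of points of non-smoothness of $\phi^2$, and hence of $\phi$ (the square root introduces no new non-smoothness since $\phi^2>0$ at any peak, as a zero of $\phi$ would force $\phi$ to attain its global minimum there, not a local max). This is essentially the content of \cite[Theorem 2.3]{boyd2}, which we may simply cite.

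Finally, to get the $C^2$ statement at primary and secondary peaks: at a local maximizer $\om_0$ of $\phi$, I claim the maximum is attained by a single analytic branch in a neighborhood of $\om_0$. Indeed, if $\om_0$ were a crossing point of two branches $\ld_i,\ld_j$ with $\ld_i(\om_0)=\ld_j(\om_0)=\phi(\om_0)^2$, then since $\max\{\ld_i,\ld_j\}$ has a one-sided derivative equal to $\max$ of the two derivatives on each side, a local max of the envelope at a crossing forces $\ld_i'(\om_0)\le 0$ and $\ld_j'(\om_0)\ge 0$ (from the left) together with the reversed inequalities from the right, which is consistent only in the degenerate situation; the point is that even then the envelope is at worst locally $\min$ of smooth pieces and one checks $C^2$ fails only on the isolated crossing set, which at a peak one can rule out by the structural argument of \cite{boyd2}, or simply absorb into "finite number of points of non-smoothness." So near each primary or secondary peak, $\phi^2$ coincides with one fixed real-analytic eigenvalue branch, hence is real-analytic, hence $C^2$, and so is $\phi=\sqrt{\phi^2}$ since $\phi^2>0$ there. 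The main obstacle — and the reason I would lean on the citation rather than reproving everything — is the careful handling of the crossing points: establishing that a local maximum of the upper envelope cannot occur exactly at a branch-crossing requires either the analyticity-based genericity argument or a direct examination of one-sided derivatives, and making this fully rigorous (rather than "generically true") is the delicate part; fortunately \cite[Theorem 2.3]{boyd2} already carries this out, so the proof here reduces to citing it after noting that rationality and properness of $T_{wz}(K^*,\cdot)$ put us exactly in its hypotheses.

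\begin{proof}
This is \cite[Theorem 2.3]{boyd2}, whose hypotheses are met: for the stabilizing finite-dimensional controller $K^*$ the closed-loop transfer function $T_{wz}(K^*,j\omega)$ is rational and proper in $\omega$ with no poles on the imaginary axis, so $\omega \mapsto T_{wz}(K^*,j\omega)^* T_{wz}(K^*,j\omega)$ is a real-analytic Hermitian-matrix-valued function on $[0,\infty]$, and $\phi^2$ is the upper envelope of its finitely many globally real-analytic eigenvalue branches. Such an envelope is real-analytic off an isolated set of branch crossings, which yields the finite number of points of non-smoothness; near any primary or secondary peak the envelope coincides with a single analytic branch, which is moreover bounded away from $0$ there, so $\phi$ is $C^2$ in a neighborhood of that peak.
\hfill $\square$
\end{proof}
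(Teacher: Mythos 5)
Your argument is essentially the paper's own: both invoke Kato/Rellich to obtain real-analytic eigenvalue branches of the Hermitian family $\omega \mapsto T_{wz}(K^*,j\omega)^H T_{wz}(K^*,j\omega)$, observe that $\phi^2$ is a finite maximum of real-analytic functions (so crossings are isolated and local maxima of the upper envelope avoid transversal crossings), use $\phi>0$ to pass to the square root, and defer the remaining details to \cite[Theorem 2.3]{boyd2}. One small correction: since $G$ is infinite-dimensional, $T_{wz}(K^*,s)$ is meromorphic but not rational, so you should drop the word ``rational''; the argument is unaffected because real-analyticity of the Hermitian family on $j\mathbb R$ (away from poles, excluded by closed-loop stability) is all that Kato's theorem requires.
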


\begin{proof}
By \cite[Thm. 6.1]{kato} the one-parameter family
of Hermitian matrices $\omega \mapsto \mathscr T(\omega) = T_{wz}(K^*,j\omega)^HT_{wz}(K^*,j\omega)$ has real analytic eigenvalue functions $\lambda_\nu(\omega)$,
hence $\phi^2(\omega)$ is a finite maximum of real analytic functions, and then also $\phi$
because $\phi > 0$. The rest of the argument is now as in \cite{boyd2}.
\hfill $\square$
\end{proof}

\begin{remark}
In consequence, $\phi$ is twice continuously differentiable in a neighborhood of each peak,
and in particular on a set $\{\omega\in [0,\infty]: \phi(\omega) > \|T_{wz}(K^*)\|_\infty-\vartheta_0\}$ for some $\vartheta_0 > 0$. This means  Lemma \ref{katz} is applicable.
\end{remark}

We use this to construct the grid $\Omega_{\rm opt}$ used in (\ref{hinfstruct}) as follows.
Start with $\omega_0=0$. Having constructed $\omega_i$, compute an extrapolation
$\omega_i^\sharp> \omega_i$ and obtain $M = \max \{M[\omega_i,\omega]: \omega \in [\omega_i,\omega_i^\sharp]\}$. Then choose
$\omega_{i+1} \in (\omega_i,\omega_i^\sharp]$ such that 
\begin{equation}
\label{rule}
M (\omega_{i+1} - \omega_i) < 2\max\{\phi(\omega_i),\phi(\omega_{i+1})\}+2 \vartheta-\phi(\omega_i)-\phi(\omega_{i+1}).
\end{equation}
 %Then loop on.
If $G(s)$ is available analytically, then $M[\cdot,\cdot]$ is computable. In the numerical approach we use a finite difference
estimation $\phi'(\omega) \approx (\phi(\omega^+)-\phi(\omega)) /(\omega^+-\omega)$. Since $\phi$ is continuously differentiable near the peak values, this gives excellent results.  
In our experience, 
the method rarely leads to grids with more than a few hundred of nodes, which allows an efficient solution of the optimization program. 
A typical example is shown in Figure \ref{fig-vartheta}.

The following result  justifies our method theoretically.

\begin{theorem}
If $0 < \vartheta \leq \vartheta_0$ and if a first-order bound $M[\cdot,\cdot]$ for $\phi = \overline{\sigma}\left( T_{wz}(K^*,\cdot) \right)$ in tandem with rule
{\rm (\ref{rule})} is used in step {\rm 4} of
algorithm {\rm \ref{algo3}} to construct $\Omega_{\rm opt}$, then the gain $\gamma^*$ achieved by the solution $K^*$ of 
{\rm (\ref{hinfstruct})} is {\rm certified} to satisfy $\gamma^* \geq \|T_{wz}(K^*)\|_\infty - \vartheta$.
%\hfill $\square$
\end{theorem}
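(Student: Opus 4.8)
\noindent\emph{Proof strategy.} The plan is to reduce the global certificate to a node-by-node application of Lemma~\ref{katz}. First I would unwind the notation: the gain returned by (\ref{hinfstruct}) is
\[
\gamma^* \,=\, f(\x^*) \,=\, \|T_{wz}(K^*)\|_{\infty,d} \,=\, \max_{\omega\in\Omega_{\rm opt}}\phi(\omega),
\qquad \phi(\omega)=\overline{\sigma}\big(T_{wz}(K^*,j\omega)\big),
\]
so that $\gamma^*\geq\phi(\omega_i)$ for \emph{every} node $\omega_i\in\Omega_{\rm opt}$, while $\gamma^*\leq\|T_{wz}(K^*)\|_\infty$ holds automatically, being a maximum over a subset. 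Since the asserted bound $\gamma^*\geq\|T_{wz}(K^*)\|_\infty-\vartheta$ is equivalent to $\phi(\omega)\leq\gamma^*+\vartheta$ for all $\omega\in[0,\infty]$, and the closed intervals $[\omega_i,\omega_{i+1}]$ between consecutive nodes of $\Omega_{\rm opt}$ cover the compact interval $[0,\infty]$, it suffices to prove $\phi(\omega)<\gamma^*+\vartheta$ on each such subinterval.

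The second step is to note that the grid rule (\ref{rule}) is, for the single uniform choice $\gamma^*=\max_{\omega\in\Omega_{\rm opt}}\phi(\omega)$, precisely the hypothesis (\ref{var}) of Lemma~\ref{katz}. Indeed the quantity $M$ produced in step~4 of algorithm~\ref{algo3} dominates the first-order bound $M[\omega_i,\omega_{i+1}]$ on $[\omega_i,\omega_{i+1}]$, so (\ref{rule}) gives
\[
M[\omega_i,\omega_{i+1}]\,(\omega_{i+1}-\omega_i)\,\leq\, M\,(\omega_{i+1}-\omega_i)\,<\, 2\max\{\phi(\omega_i),\phi(\omega_{i+1})\}+2\vartheta-\phi(\omega_i)-\phi(\omega_{i+1}),
\]
and since $\gamma^*\geq\max\{\phi(\omega_i),\phi(\omega_{i+1})\}$ the right-hand side is at most $2\gamma^*+2\vartheta-\phi(\omega_i)-\phi(\omega_{i+1})$, i.e.\ (\ref{var}) holds with this $\gamma^*$. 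Lemma~\ref{katz} then delivers $\phi(\omega)<\gamma^*+\vartheta$ throughout $[\omega_i,\omega_{i+1}]$; letting $i$ range over all consecutive pairs and maximizing over $\omega\in[0,\infty]$ gives $\|T_{wz}(K^*)\|_\infty\leq\gamma^*+\vartheta$, which is the claim.

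The main obstacle is the smoothness bookkeeping that makes Lemma~\ref{katz} genuinely applicable on every subinterval, since that lemma is phrased for a $C^1$ function while $\phi$ has finitely many points of non-smoothness (Lemma~\ref{boyd}); this is also where the hypothesis $0<\vartheta\leq\vartheta_0$ enters. I would handle it as follows. By Lemma~\ref{boyd} and the remark after it, $\phi$ is of class $C^2$ on the open superlevel set $\{\omega\in[0,\infty]:\phi(\omega)>\|T_{wz}(K^*)\|_\infty-\vartheta_0\}$, which contains every primary and secondary peak and in particular the global maximizer $\omega^*$ (there $\phi(\omega^*)=\|T_{wz}(K^*)\|_\infty>\|T_{wz}(K^*)\|_\infty-\vartheta_0$); since, by (\ref{rule}), the subintervals clustered around a peak are forced to be narrow, the subinterval carrying $\omega^*$ lies in this $C^2$ region and Lemma~\ref{katz} applies to it verbatim. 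On any remaining subinterval one uses that $\phi$ is continuous and piecewise real-analytic, hence Lipschitz with $\phi'$ defined and bounded a.e.\ by the first-order bound, so that the arc-length estimate $\mathscr L\leq\sqrt{1+M[\omega_i,\omega_{i+1}]^2}\,(\omega_{i+1}-\omega_i)$ and the inscribed-polygon inequality $\mathscr L\geq\ell$ in the proof of Lemma~\ref{katz} still hold word for word, so that its conclusion extends to such $\phi$. A final routine point is to confirm that the extrapolation loop generating $\Omega_{\rm opt}$ terminates with its last subinterval reaching $\omega=\infty$ — which it does because $T_{wz}(K^*,j\omega)$ converges to its feedthrough term as $\omega\to\infty$, so a first-order bound valid up to $\infty$ exists — so that the subintervals $[\omega_i,\omega_{i+1}]$ really do cover the compactified interval $[0,\infty]$.
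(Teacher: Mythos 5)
Your proposal is correct and follows essentially the same route as the paper: identify $\gamma^*=\max_{\omega\in\Omega_{\rm opt}}\phi(\omega)\geq\max\{\phi(\omega_i),\phi(\omega_{i+1})\}$, observe that rule (\ref{rule}) then implies hypothesis (\ref{var}) on each subinterval, and invoke Lemma~\ref{katz} (justified via Lemma~\ref{boyd} and the condition $\vartheta\leq\vartheta_0$) to conclude $\phi\leq\gamma^*+\vartheta$ on all of $[0,\infty]$. Your added discussion of the smoothness bookkeeping away from the peaks and of the grid covering $[0,\infty]$ only fills in details the paper leaves implicit; it does not change the argument.
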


\begin{proof}
Since $\phi(\omega) = \overline{\sigma}\left(  T_{wz}(K^*,j\omega) \right)$ and $\|T_{wz}(K^*)\|_\infty = \max_{\omega \in [0,\infty]} \phi(\omega)$, 
we have to show that $\gamma^* \geq \phi(\omega) - \vartheta$ for every $i$ and all $\omega \in [\omega_i,\omega_{i+1}]$, where $\omega_i$ are the nodes
of the grid $\Omega_{\rm opt}$  constructed in step 4 of algorithm \ref{algo3} based on the rule (\ref{rule}). Since $\gamma^*$ is the gain achieved by the solution
$K^*$ of (\ref{hinfstruct}) on that grid, it satisfies $\gamma^* =\max_i \phi(\omega_i)$. Hence $\gamma^* \geq \max\{\phi(\omega_i),\phi(\omega_{i+1})\}$, 
and so condition (\ref{var}) of Lemma \ref{katz} is satisfied. Since by Lemma \ref{boyd} we may apply Lemma \ref{katz} to $\phi$, 
we obtain the conclusion $\phi(\omega) \leq \max\{\phi(\omega_i),\phi(\omega_{i+1})\}+\vartheta \leq  \gamma^*+\vartheta$ on $[\omega_i,\omega_{i+1}]$, as claimed.
\hfill $\square$
\end{proof}

\begin{remark}
In  rule (\ref{rule}) we apply (\ref{var}) with $\gamma^* = \max \{ \phi(\omega_i),\phi(\omega_{i+1})\}$ on each interval $[\omega_i,\omega_{i+1}]$.
When it comes to just certifying the optimal value
$f(\x^*)=\|T_{wz}(K(\x^*))\|_\infty = \phi(\omega^*)$ in step 6 of algorithm \ref{algo3}, then  we can construct an even coarser grid by applying (\ref{var}) with $\gamma^*=\phi(\omega^*)$
the same for all $[\omega_i,\omega_{i+1}]$. Namely,  our grid can be very coarse at frequencies $\omega$ where
$\phi(\omega) \ll \phi(\omega^*)$, and still capture sharp peaks, as illustrated in Figure \ref{fig-vartheta}.   We refer to this as a verification grid $\Omega_{\rm ver}$.
According to our experience, 
the outlined method  to construct $\Omega_{\rm opt}$ is   well-adapted to discretize
the controller design problem.
\end{remark}

We can further exploit lemma \ref{katz} to obtain information on how close the values $\gamma^*$ of (\ref{hinfstruct}) 
and $\gamma_\infty$ of the infinite-dimensional program (\ref{infinite_hinfstruct}) are.
 Writing as before
$f(\x) = \|T_{wz}(K(\x))\|_{\infty,d}$ for the discrete $H_\infty$-norm on $\Omega_{\rm opt}$,  and 
$f_\infty(\x) = \|T_{wz}(K(\x))\|_\infty$ for the true $H_\infty$-norm, we compare the discretized
$H_\infty$-program $\min_\x f(\x)$, i.e., (\ref{hinfstruct}), to the underlying infinite-dimensional $\min_\x f_\infty(\x)$, i.e. (\ref{infinite_hinfstruct}). 

\begin{corollary}
Let $\x_\infty$ be a local minimum of {\rm (\ref{infinite_hinfstruct})} with value $\gamma_\infty$,
and $\x^*$ a local minimum of {\rm (\ref{hinfstruct})} with value $\gamma^*$. 
Suppose a first-order bound in tandem with rule {\rm (\ref{rule})} has been used in step {\rm 6} of algorithm {\rm \ref{algo3}}. 
Then if $\x^*$, $\x_\infty$ are within neighborhoods of local optimality of each other, we have
$f(\x_\infty) \geq f(\x^*) \geq f_\infty(\x^*)-\vartheta \geq f_\infty(\x_\infty)-\vartheta \geq f(\x_\infty)-\vartheta$.
\end{corollary}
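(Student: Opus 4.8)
The plan is to obtain the chain of five quantities by checking the four consecutive inequalities one at a time; only the middle one carries real content, the remaining three being elementary monotonicity facts. First I would record the trivial domination of the discretized norm by the true one: since $\Omega_{\rm opt}\subset[0,\infty]$, for every $\x$
\[
f(\x)=\max_{\omega\in\Omega_{\rm opt}}\overline{\sigma}\big(T_{wz}(K(\x),j\omega)\big)\le \max_{\omega\in[0,\infty]}\overline{\sigma}\big(T_{wz}(K(\x),j\omega)\big)=f_\infty(\x).
\]
Taking $\x=\x_\infty$ gives $f_\infty(\x_\infty)\ge f(\x_\infty)$, hence $f_\infty(\x_\infty)-\vartheta\ge f(\x_\infty)-\vartheta$, which is the last inequality in the chain.

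Next I would exploit the two local optimality hypotheses. Since $\x^*$ is a local minimizer of (\ref{hinfstruct}) and, by assumption, $\x_\infty$ lies in the neighborhood on which this minimality holds, we get $f(\x^*)\le f(\x_\infty)$, the first inequality. Symmetrically, $\x_\infty$ being a local minimizer of (\ref{infinite_hinfstruct}) and $\x^*$ lying in the corresponding neighborhood of local optimality, we get $f_\infty(\x_\infty)\le f_\infty(\x^*)$, which after subtracting $\vartheta$ is the third inequality.

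The only substantial step is the middle one, $f(\x^*)\ge f_\infty(\x^*)-\vartheta$. Here I would invoke the preceding certification Theorem: with $0<\vartheta\le\vartheta_0$ and a first-order bound $M[\cdot,\cdot]$ for $\phi(\omega)=\overline{\sigma}(T_{wz}(K^*,j\omega))$ used in tandem with rule (\ref{rule}) to construct the grid, the gain $\gamma^*=f(\x^*)$ is certified to satisfy $\gamma^*\ge\|T_{wz}(K^*)\|_\infty-\vartheta=f_\infty(\x^*)-\vartheta$. This is where all the work is concentrated: it rests on Lemma \ref{boyd}, which guarantees that $\phi$ is $C^2$ near its peaks so that Lemma \ref{katz} is applicable, and on the geometric length estimate proved there. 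Concatenating the four inequalities in the order
\[
f(\x_\infty)\ \ge\ f(\x^*)\ \ge\ f_\infty(\x^*)-\vartheta\ \ge\ f_\infty(\x_\infty)-\vartheta\ \ge\ f(\x_\infty)-\vartheta
\]
then yields the corollary. The one point worth stressing — and the only place the argument could go wrong — is the hypothesis that $\x^*$ and $\x_\infty$ each lie in the other's neighborhood of local optimality: without it the two optimality inequalities refer to unrelated, possibly far apart, local minimizers of $f$ and of $f_\infty$, and cannot be chained.
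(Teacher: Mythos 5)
Your proposal is correct and follows essentially the same route as the paper: the first and third inequalities come from the two local optimality hypotheses together with the assumption that each minimizer lies in the other's neighborhood, the last from the trivial domination $f\le f_\infty$, and the middle one from the grid-certification result (Lemma \ref{katz} via the preceding theorem). The only cosmetic difference is that you cite the certification theorem where the paper cites Lemma \ref{katz} directly, which amounts to the same argument.
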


\begin{proof}
Indeed, $f(\x_\infty) \geq f(\x^*)$ because $\x^*$ is a minimum of $f$ on a neighborhood $U(\x^*)$, and $\x_\infty \in U(\x^*)$
by hypothesis. Next $f(\x^*) \geq f_\infty(\x^*)-\vartheta$ by Lemma \ref{katz}, because construction of the grid uses the bound $M[\cdot,\cdot]$ and rule (\ref{var}).
Next $f_\infty(\x^*) \geq f_\infty(\x_\infty)$, because $\x_\infty$ is a minimum of $f_\infty$ on a neighborhood
$U(\x_\infty)$, and $\x^*\in U(\x_\infty)$ by hypothesis. The last inequality is satisfied because $f \leq f_\infty$. 
\hfill $\square$
\end{proof}

This means comparable locally optimal values of the infinite dimensional $H_\infty$-program (\ref{infinite_hinfstruct})  and its approximation (\ref{hinfstruct})
differ by at most $\vartheta$, our apriori chosen tolerance.
Since most of the time our algorithm finds the global minimum of (\ref{hinfstruct}), this is a very useful result in practice, as
it determines the value of the infinite dimensional $H_\infty$-program (\ref{infinite_hinfstruct}) within a prior tolerance level $\vartheta$.

The argument remains valid if $\x_\infty$, $\x^*$ are only approximate local minima, say up to the same tolerance $\vartheta$ in the values.
Then we get the chain
$f(\x_\infty) \geq f(\x^*)-\vartheta \geq f_\infty(\x^*)-2\vartheta \geq f_\infty(\x_\infty) - 3 \vartheta \geq    f(\x_\infty)-3\vartheta$, so here
our approximation (\ref{hinfstruct}) gives the correct value up to an error of $3\vartheta$ in the values.

\section{Applications}
\label{numerics}
In this section, we apply our method to several challenging studies in control of infinite dimensional system, and in particular,
to boundary and distributed control of systems of parabolic partial differential equations.

\subsection{Computation of $G(s)$  in boundary control}
\label{illustration}
We illustrate how  our procedure is applied to boundary control
of parabolic PDEs. Consider a boundary problem of the form
\begin{align}
\label{parabolic}
\begin{split}
z_t(x,t) = \sum_{|\alpha|,|\beta| \leq m} (-1)^{|\alpha|} D^\alpha (a_{\alpha\beta}(x) D^\beta z)(x,t) = 0& \qquad \mbox{ $(x,t)\in Q \times [0,\infty)$} \\
D^{i-1}_\nu z(x,t) = \mathcal U_i(x,t)  \qquad x \in \partial Q, \; & i=1,\dots,m,
\end{split}
\end{align}
where $\mathcal U_i$ are abstract controls acting on the boundary $\partial Q$.
Here for convenience $Q$ is bounded open with $\partial Q$  a compact orientable $C^\infty$-manifold, the coefficients are $a_{\alpha,\beta}\in C^\infty(\overline{Q})$, and
uniform ellipticity $\sum_{|\alpha|,|\beta| \leq m} a_{\alpha \beta}(x) \xi^\alpha \xi^\beta \geq c |\xi|^2$ is assumed for $x\in Q$.  Then by \cite{salamon} 
problem (\ref{parabolic})  may be  represented in the abstract form (\ref{system}), where however the input space $U$
is potentially  still infinite dimensional. To comply with our assumption that $K(s)$ should be finite-rank, 
i.e., that input and output spaces $U,Y$ should be finite-dimensional,  we
select basis functions $\phi_{ik}$ on the boundary $\partial Q$ and  replace the boundary control action $\mathcal U$ in (\ref{parabolic}) by a finite-dimensional
version
\[
\hspace*{2.6cm}D^{i-1}_\nu z(x,t) = \sum_{k=1}^N \phi_{ki}(x) u_{ik}(t), \quad x\in \partial Q, \; i=1,\dots,m,   %\hspace*{3cm} (\ref{dirichlet})'
\]
which now has input space $U \simeq \mathbb R^{Nm}$. A finite-dimensional output space $Y \simeq \mathbb R^p$ could be obtained by taking
measurements of the form
\[
y_i(t) = \int_Q \psi_i(x) z(x,t) \, dx, \quad i=1,\dots,p,
\]
with another set of basis functions $\psi_i$ on $\Omega$ representing sensors. For one-dimensional $Q$ point
evaluations on $\partial Q$ are possible. 
This case will be used in our numerical experiments.

Referring to \cite{salamon,zwart,sano} for the correct setup of (\ref{system}), we directly pass to the computation of
$G(s)$. Laplace transforming (\ref{parabolic}) with initial condition $z(x,0)=0$ leads for fixed $s\in \mathbb C$ to the elliptic boundary value
problem
\begin{align}
\label{elliptic}
\begin{split}
sz(x,s) = \sum_{|\alpha|,|\beta| \leq m} (-1)^{|\alpha|} D^\alpha (a_{\alpha\beta}(x) D^\beta z)(x,s) = 0 \qquad \mbox{$x \in Q$} \\
D^{i-1}_\nu z(x,s) =  \sum_{k=1}^N \phi_{ki}(x) u_{ik}(t) \qquad x \in \partial Q,   \; i=1,\dots,m. 
\end{split}
\end{align}
Then $G_{ikr}(s)$ is obtained by solving (\ref{elliptic}) with $u_{ik}=1$
$u_{i',k'}=0$ for $(i',k')\not=(i,k)$, and by computing $y_r(s)=\int_Q \psi_r(x) z(x,j\omega) \, dx$.
For the one-dimensional case  $Q=[0,1]$,  point evaluations $y_r(s) = z_r(0,s)$, $y_r(s)= z_r(1,s)$, or linear combinations
of those, are possible. 

\begin{remark}
Computation
of $G'(s)$ can also be obtained by solving an elliptic boundary value problem, which is (\ref{elliptic}) differentiated with respect to $s$.
Since $K$ is known explicitly, this is useful when computing the bound $M[\cdot,\cdot]$ of $\phi$ in Lemma \ref{katz}.
\end{remark}

\begin{remark}
In those cases where computations are not performed formally, a high spatial resolution
is used to solve (\ref{elliptic}) accurately. One such solve  can then be interpreted as a function
evaluation  $G_{ikr}(s)$. If carried out numerically, we perform the computation of
$G(s)$ for fixed $s=j\omega$ with the highest spatial discretization available in our setup. Typically, this is at least as accurate spatially as
our final simulation of the closed-loop system. Since
the number of inputs and outputs is not very large, pre-computing $G(s)$  will not seriously burden the overall performance of
algorithm \ref{algo3}.  Since pre-computing $G(j\omega)$ is done off-line, it neither impedes  the optimization phase, nor
the plant modeling phase.
\end{remark}

\subsection{Reaction-convection-diffusion equation}
\label{1D}
We consider a non-linear reaction-convection-diffusion equation with Danckwaerts and von Neumann boundary conditions
\begin{align}
\begin{split}
\frac{\partial C(z,t)}{\partial t} = D\frac{\partial^2C(z,t)}{\partial z^2} - U(t)\frac{\partial C(z,t)}{\partial z} - kC(z,t)&\qquad (z,t) \in [0,L] \times [0,\infty) \\
D\frac{\partial C(0,t)}{\partial z} - U(t)(C(0,t) - C_{\rm in}) = 0,& \qquad \frac{\partial C(L,t)}{\partial z}=0. 
\end{split}
\end{align}
The process represents a chemical reaction in a cylindrical plug flow reactor with time-varying flow velocity $U(t)$, constant axial
dispersion $D$, and constant reaction rate $k$. The dynamics of the  reaction  $A \stackrel{k}{\to} B$ are described by the spatially and temporally varying concentration $C(z,t)$
of reactant $A$, the concentration of product $B$ being a dependent state. Using online measurement $y(t)=C(L,t)-C_{ss}(L)$  of the concentration of ingredient $A$ at the outflow position $z=L$
we steer the plug flow velocity $U(t)$ to maintain the process in steady-state $U_{ss}$, $C_{ss}(z)$, $y_{ss}$,  while attenuating measurement noise and a disturbance of the flow velocity,
and to enable speedy tracking of set-point changes in the steady-state concentration. {\color{black}We refer to \cite{sano} or \cite[Example 3.3.5]{zwart} for the correct
setup of this problem as a Hilbert space linear system (\ref{system}).}

%\vspace*{-4cm}
  \begin{figure}[ht!]
  \begin{center}
  \includegraphics[scale=0.55]{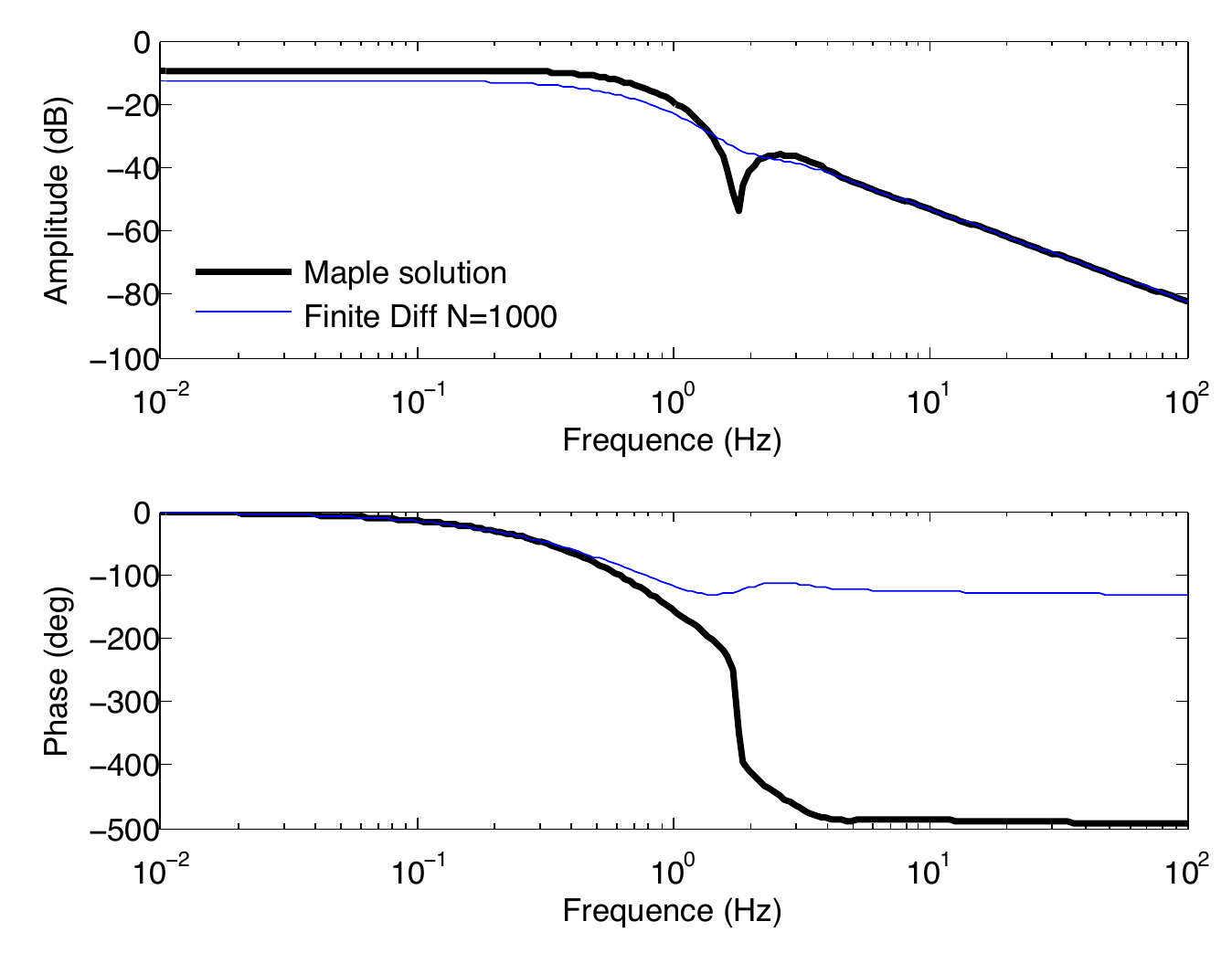}
  \includegraphics[scale=0.55]{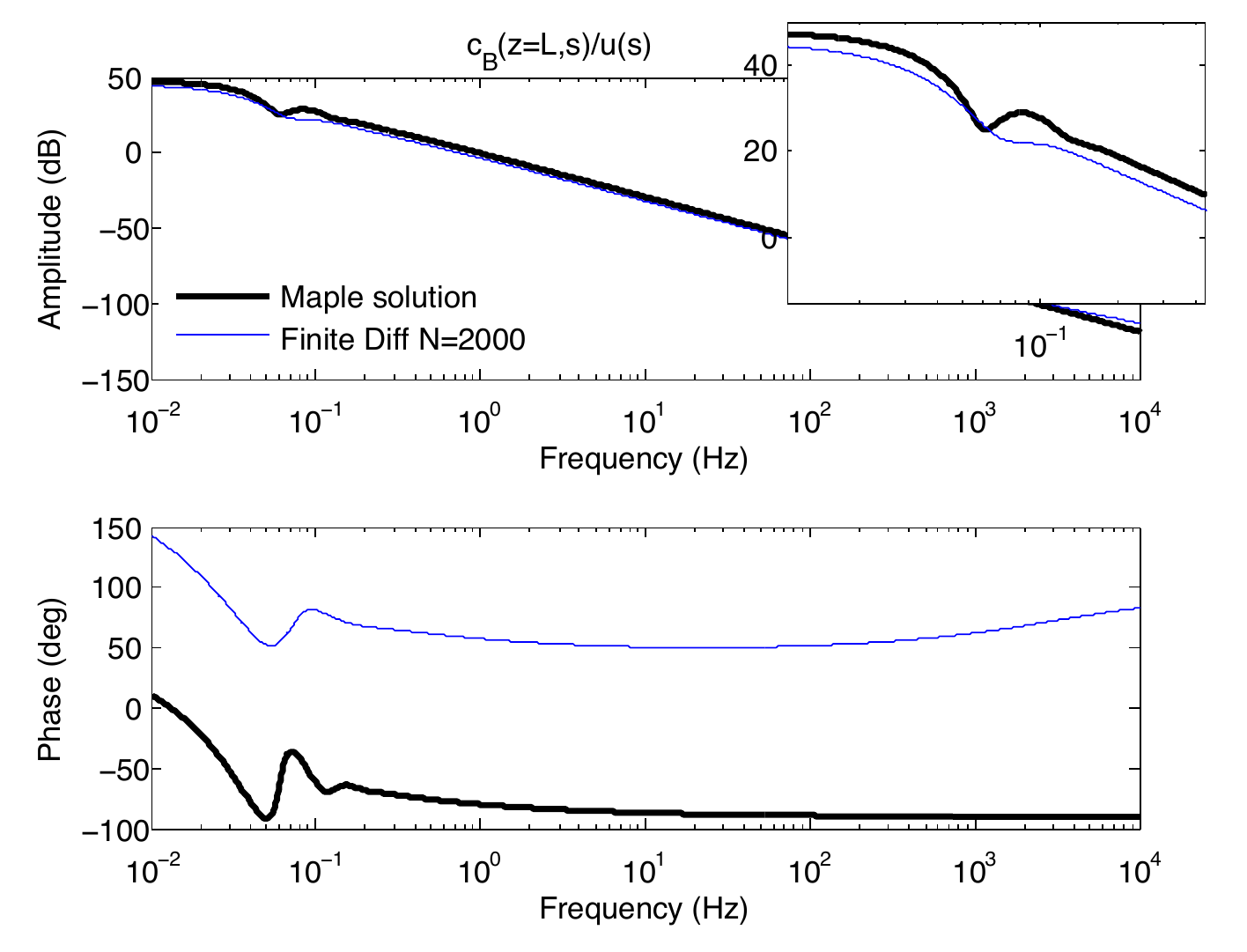}
  \end{center}
  \vspace*{-.4cm}
  \caption{\small Bode plot of infinite-dimensional transfer function $G(s)$ compared with finite-difference based  $G_{fd}(s)$.
  Left shows study 1 computed with Maple, compared to finite-differences of order $N=1000$. Even with 5000 states
 the transmission zero at frequency $\omega 1.6$Hz, which is missed by the discretization of order $N=1000$.  
Right shows Van de Vusse
study with $G(s)$ computed via numerical Maple solve of (\ref{linearized2}), compared to finite-differences of order $N=2000$. \label{compare}}
    \end{figure}

Fixing a steady-state flow velocity $U_{ss}$,  we  
compute the corresponding steady-state concentration $C_{ss}(z)$ by solving the one-dimensional boundary value problem
\begin{align}
\begin{split}
 DC_{ss}''(z) - U_{ss}C_{ss}'(z) - kC_{ss}(z) &=0\\
 DC_{ss}'(0) - U_{ss}(C_{ss}(0) - C_{\rm in}) = 0,& \qquad C_{ss}'(L)=0. 
  \end{split}
\end{align}  
  Linearization about steady-state with $U(t) = U_{ss} + u(t)$ and $C(z,t) = C_{ss}(z) + c(z,t)$ leads now to the linearized boundary and distributed control problem
  \begin{align}
  \label{linear}
  \begin{split}
  c_t(z,t) = Dc_{zz}(z,t) - U_{ss} c_z(z,t) - C_{ss}'(z) u(t) - kc(z,t) \\
   Dc_z(0,t) - U_{ss}c(0,t) + (C_{in}-C_{ss}(0)) u(t)=0,  \qquad
  c_z(L,t)=0.
  \end{split}
  \end{align}
  The linearized output is $y(t) = c(L,t)$. In this case steady-state $C_{ss}(z)$ and transfer function $G(s) = y(s)/u(s)$
  of the linearized  equation can be computed formally using Maple. We obtain

  \begin{align}
C_{ss}(z)=C_{in}\,b\,\frac{(b-f)e^{\frac{f(1-{{z}/{L}})-{{bz}/{L}}}{2a}}-(b+f)e^{\frac{f({{z}/{L}}-1)-{{bz}/{L}} }{2a}}}{(-bf-2ak-b^2)e^{\frac{-f}{2a}}-(bf-2ak-b^2)e^{\frac{f}{2a}}}
\end{align}
%\begin{align}
%C_{ss_z}(z)=\frac{C_{in}(f+b)(f-b)}{2La}
%\frac{e^{\frac{(1-{\frac{z}{L}})f-{\frac{z}{L}} b}{2a}}-e^{\frac{({\frac{z}{L}}-1)f- \lambda b}{2a}}}{(-bf+2ac-b^2)e^{\frac{-f}{2a}}-(bf+2ac-b^2)e^{\frac{f}{2a}}}
%\end{align}
where  
\begin{equation}\label{eqabc} 
a=\frac{D}{L^2},\quad b=\frac{-U_{ss}}{L}, \quad f = \sqrt{b^2+4ak}.
\end{equation}
Then the transfer $u(s)\to c(z,s)$ is obtained analytically as $C_{in}\, \frac{P_1(z,s)}{P_2(z,s)}$,  where
\begin{align*}
P_1(z,s)=&\left[2La(s+k)\int_0^1 f_1(x)dx-L T_2\int_0^{\frac{z}{L}} f_2(x)dx-T_3(m-1)\right]e^{T_4(z)}+\\
&\left[LT_6\int_0^{\frac{z}{L}} f_1(x)dx+2La(s+k)\int_0^1f_2(x)dx+T_8(m-1)\right]e^{T_9(z)}+\\
&e^{T_5(z)}LT_2\int_1^{\frac{z}{L}} f_1(x)dx+e^{T_7(z)}LT_6\int_{\frac{z}{L}}^1f_2(x)dx\\
P_2(z,s)&=LT_1(T_2e^{\frac{T_1}{2a}}+T_6e^{\frac{-T_1}{2a}})
\end{align*}
with
\begin{align*}
& T_1=\sqrt{b^2+4a(s+k)},\quad T_2=bT_1+2a(-k-s)-b^2,\quad T_3=bT_1+4a(-k-s)-b^2,\\
&T_4(z)=\frac{{\frac{z}{L}} b+({\frac{z}{L}}-1)T_1}{-2a},\quad 
T_5(z)=\frac{-{\frac{z}{L}} b+({\frac{z}{L}}+1)T_1}{2a},\quad 
T_6=bT_1+2a(s+k)+b^2,\\ &
T_7(z)=\frac{{\frac{z}{L}} b+({\frac{z}{L}}+1)T_1}{2a},\quad
T_8=bT_1+4a(s+k)+b^2,\quad T_9(z)=\frac{-{\frac{z}{L}} b+({\frac{z}{L}}-1)T_1}{2a}\\
&f_1(z)=C_{ss_z}(z)e^{\frac{(b-T_1)z}{2aL}},\quad f_2(z)=C_{ss_z}(z)e^{\frac{(b+T_1)z}{2aL}},\quad m=\frac{C_{ss}(0)}{C_{in}}.
\end{align*}
The transfer $u \to y$ is then obtained at $z=L$.

  Adopting  numerical values
  $D= 1.05\, {\rm m}^2/{\rm min}$, $U_{ss} = 1.24\,{\rm m/min}$, $C_{\rm in}=0.5\,{\rm mol/m}^3$, $k=0.25\,{\rm m}^3{\rm /mol}$, and $L=6.36\,{\rm m}$
  from a study in \cite{fogler}, we can compare the the infinite-dimensional transfer function $G(s)$ with a finite-difference approximation
  $G_{\rm fd}(s)$. Figure \ref{compare} left shows the comparison of $G(s)$ and $G_{\rm fd}(s)$.
  
The scheme for synthesis is shown in Figure \ref{fig-scheme} and uses the filters in  Figure \ref{fig-filters} (right), which are defined as
\[
W_e(s)=\frac{0.00001 s + 5}{s+0.25}, \qquad W_n(s) = \frac{0.00125 s^2+0.00035 s +0.00005}{0.000025 s^2+0.007 s +1}, \qquad W_u=0.1.
\]
 The controller structure $\mathscr K_{\rm pi}$ includes SISO PI-controllers with two parameters $K(s)=k_p + \frac{k_i}{s}$, so $\x = (k_p,k_i)$.
 With the mixed performance-robustness channel $w=(r,n) \to z = (z_e,z_u)=(W_ee,W_uu)$ we have now defined the objective $\|T_{wz}(K(\x))\|_\infty$ of our
 problem, and according to section \ref{barrier}  the 
 objective $f$ 
 is complemented by the barrier function $c\|S(K(\x))\|_\infty$.  Note that  in the scheme 
 of Figure \ref{fig-scheme} the sensitivity function $S$ equals the unfiltered closed-loop
 transfer function $T_{re}$, so altogether with regard to the general form (\ref{hinfstruct}) in this study the channel
 $(r,n) \to (z_e,z_u,ce)$ is optimized, where $c=0.2$. The optimal solution obtained by algorithm \ref{algo1} is $\x^*=(k_i^*,k_p^*)=(9.93\cdot 10^{-5},7.13\cdot 10^2)$.

\begin{figure}[ht!]
\centerline{
\includegraphics[scale=1.0]{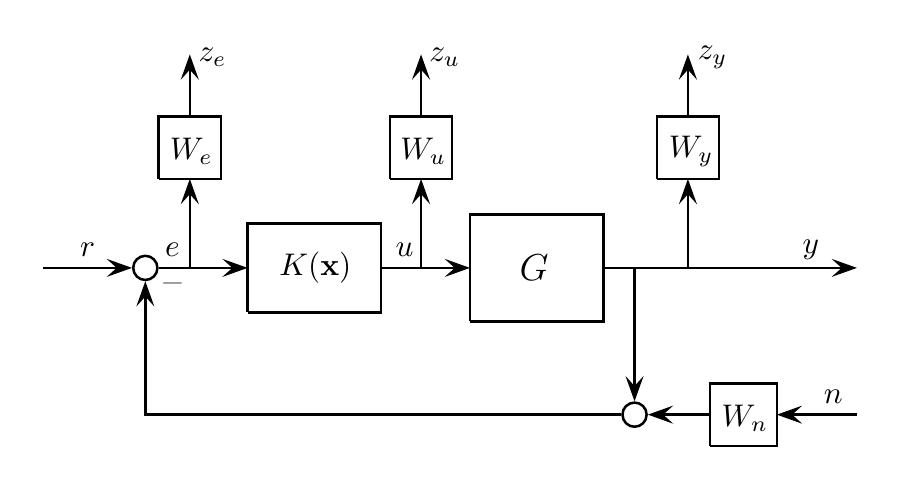}
}
\caption{\small Scheme for synthesis. In studies \ref{1D} and \ref{reactor}
the $H_\infty$-norm of the performance channel $(r,n)\to (z_e,z_u)$ is minimized, which assures that the system reacts to a set-point change $r$, 
and attenuates noise with
bandwidth specified by the filter $W_n$. Tracking $e$ is in the low-frequency range specified by $W_e$.   In study \ref{sect_cavity} the channel
$r \to (z_e,z_y)$ is optimized. \label{fig-scheme}}
\end{figure}
%\vspace*{2cm}

\begin{figure}[ht!]
\begin{center}
\includegraphics[scale=0.5]{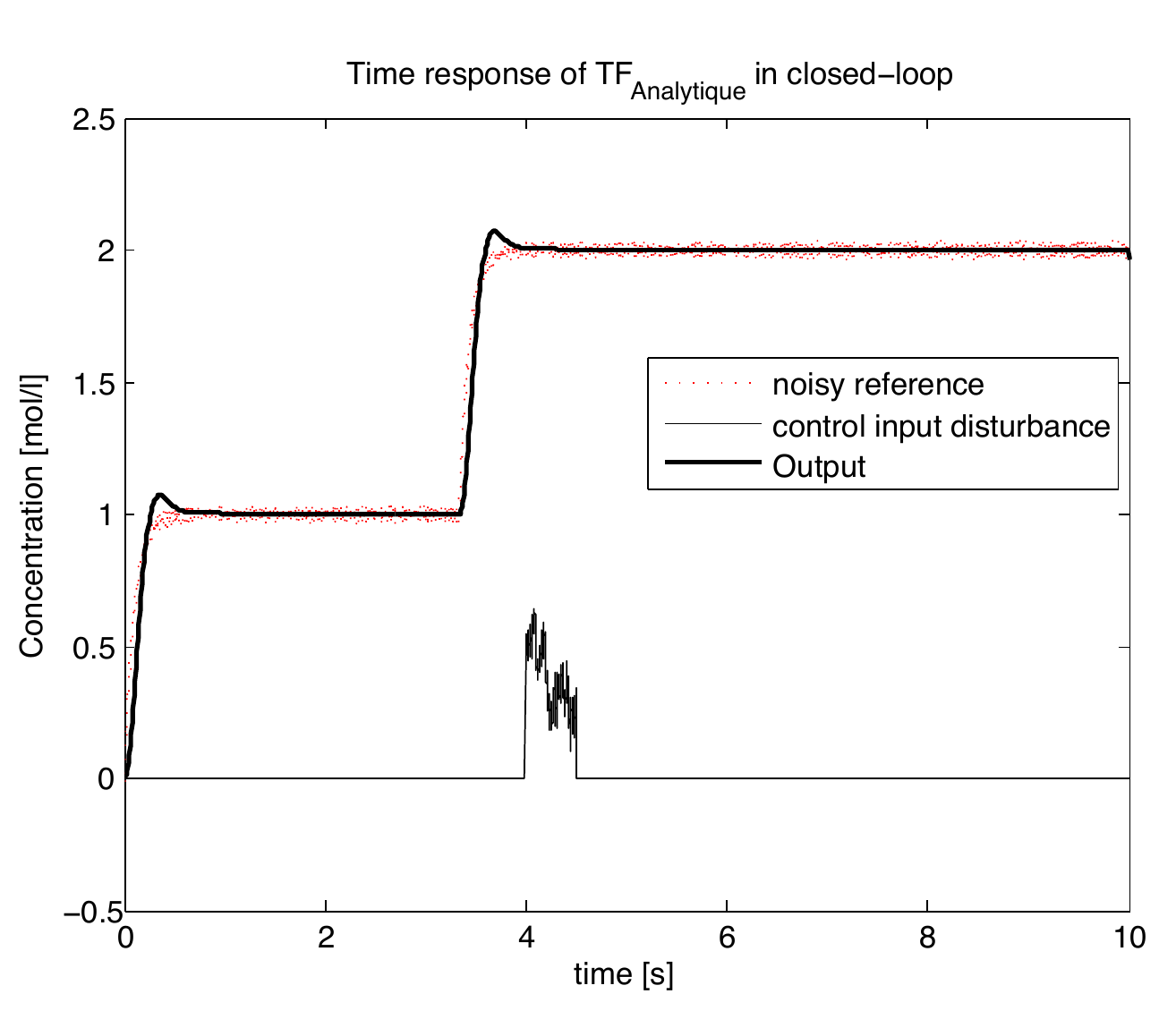}
\includegraphics[scale=0.5]{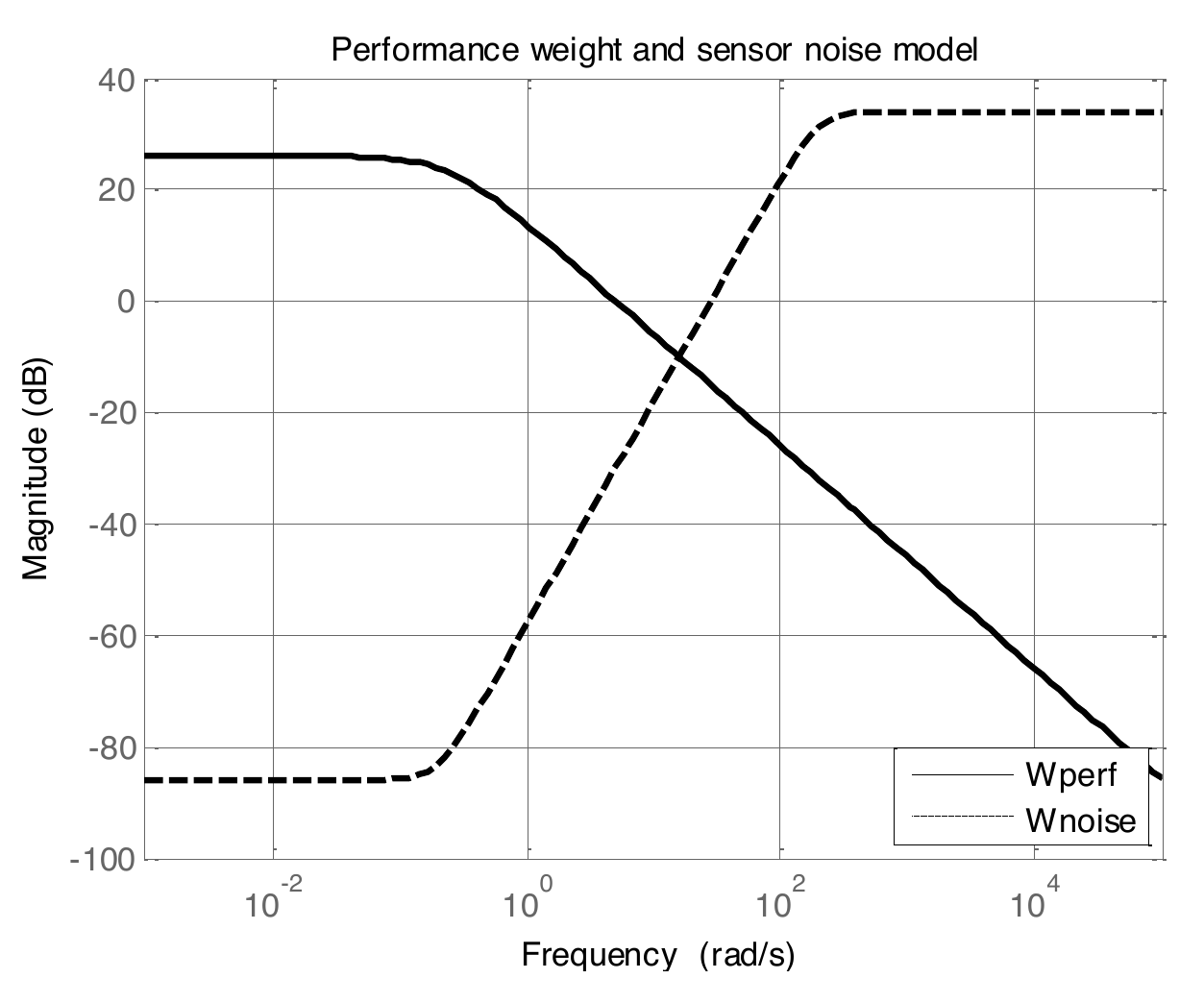}
\end{center}
\vspace*{-.2cm}
\caption{\small Left: Time response to a noisy reference signal of analytic closed loop. Right: Filter $W_e$  for synthesis and noise filter $W_n$. \label{fig-filters}}
\end{figure}

\begin{remark}
The functional analytic setup for (\ref{linear}) is as follows. On the Hilbert space $H=\mathcal L^2([0,L])$ define the differential operator
$\mathscr A = D\frac{d^2}{dz^2}-U_{ss} \frac{d}{dz} - k$ with domain $D(\mathscr A)=\{h\in H: h,\frac{d}{dz}h \, \mbox{a.c.}, \frac{d^2}{dz^2}h\in \mathcal L^2([0,L])\}$,
and the boundary control operator by $\mathscr P =\left( D \frac{d}{dz} - U_{ss}\right)/(C_{ss}(0)-C_{in})$ with domain
$D(\mathscr P)=\{h\in H: \frac{d}{dz} h \in \mathcal L^2([0,L]), h \mbox{ a.c.}\}$, so that $D(\mathscr A) \subset D(\mathscr P)$. One defines
$\alpha = \left( C_{ss}(0)-C_{in} \right) / \left( D-U_{ss}/2  \right)$ and the function $b(z)=\frac{\alpha}{2}(z+1)^2$, then
the multiplication operator $Bu = b(z)u$ satisfies $\mathscr P(Bu) = u$, and now we have a boundary control problem in the sense
of \cite[Def. 3.3.2]{zwart}, which can be brought to the form (\ref{system}).
\end{remark}

\subsection{Van de Vusse reactor}
\label{reactor}
The following more challenging study uses a
parallel {\em Van de Vusse process}
\begin{align*}
&A \stackrel{k_1}\longrightarrow B \stackrel{k_2}\longrightarrow C \\
&A + A \stackrel{k_3}\longrightarrow D
\end{align*}
which we want to operate in continuous mode in an isothermal reactor. In \cite{van_de_vusse}
Van de Vusse considers $A=$ cyclopentadiene,
$B=$ cyclopentenol, $C=$ cyclopentanediol,
$D=$ dicyclopentadiene. The desired output product is $B$, the inlet is the primary product $A$, whereas
$C,D$ are waste products. The reaction takes place in a steady-plug-flow cylindrical chemical reactor of length $L$, through which a fluid
with velocity $U$ is flowing, where ingredients are mixing axially  with dispersion coefficient $D$, while the now non-linear chemical reaction is described by the rates $k_i$. 
In ODE-based models the axial dispersion is often neglected, or replaced by a singular perturbation approach \cite{dochain}. 
Here we discuss the full non-linear model. For the functional analytic setup of the problem see again \cite{sano}.

Assuming 
radially homogeneous conditions in the tube, the system can be described by one  spatial dimension $z$, and 
the reaction for ingredients $A$ and $B$ is governed by the following diffusion-convection-reaction
system of parabolic PDEs:
\begin{align}
\begin{split}
\frac{\partial C_A(z,t)}{\partial t} &= D \frac{\partial^2 C_A(z,t)}{\partial z^2} - U \frac{\partial C_A(z,t)}{\partial z}
- k_1 C_A(z,t) - k_3 C_A^2(z,t) \\
\frac{\partial C_B(z,t)}{\partial t}&= D \frac{\partial^2 C_B(z,t)}{\partial z^2} - U\frac{\partial C_B(z,t)}{\partial z}
+ k_1 C_A(z,t) - k_2C_B(z,t)
\end{split}
\end{align}
for  $(z,t)\in [0,L]\times [0,\infty)$, 
with Danckwaerts and von Neumann boundary conditions
\begin{align}
\begin{split}
\label{bdry}
&D \frac{\partial C_A(0,t)}{\partial z} - U\left( C_A(0,t) - C_{Ain}  \right) = 0 , \quad D \frac{\partial C_B(0,t)}{\partial z} - U C_B(0,t)   = 0 \\
&\frac{\partial C_A(L,t)}{\partial z}= 0, \qquad \frac{\partial C_B(L,t)}{\partial z}=0
\end{split}
\end{align}
for all $t\in [0,\infty)$.   
The meaning of these boundary conditions at $z=0$ is that as soon as the feed enters the reactor at $z=0$, it will
be diluted by the axial mixing caused by the flow. At $z=L$ we have Neumann boundary conditions, which
simply require that the concentration stops changing at the point where the flow leaves the reactor. 

The goal of the study is to operate the reactor at a steady-state
flow $U_{ss}$ leading to a steady outflow $C_{Bss}(L)$ of product $B$ at the outlet of the reactor. This steady-state flow has to be controlled
by feedback, where we have the possibility to act on the velocity $U(t) = U_{ss} + u(t)$, and where
we use the deviation $y(t)=C_B(L,t) - C_{Bss}(L)$ from the steady-state production as our online measurement at the outlet. It is assumed that changes of the axial
flow velocity do not affect the axial dilution $D$ assumed constant. Control has to maintain a stable steady-state,
attenuate measurement noise and disturbances at the inflow, and enable the system to react to set-point changes in the flow velocity $U$.

Our procedure starts by computing the steady-state, which leads to solving the system of ODEs
\begin{align}
\begin{split}
&DC_{Ass}''(z)- U_{ss} C_{Ass}'(z)- k_1 C_{Ass}(z) - k_3 C_{Ass}^2(z) = 0 \\
&DC_{Bss}''(z) - U_{ss} C_{Bss}'(x) + k_1 C_{Ass}(z) - k_2 C_{Bss}(z) = 0 
\end{split}
\end{align}
with steady-state boundary conditions
\begin{align}
\label{bdry_ss}
\begin{split}
D  C_{Ass}'(0) - U_{ss} \left(  C_{Ass}(0)-C_{Ain}\right)  = 0, &\quad 
D C_{Bss}'(0) - U_{ss}   C_{Bss}(0) = 0, \\
C_{Ass}'(L)= 0 ,& \quad C_{Bss}'(L)=0.
\end{split}
\end{align}
This defines a mapping $U_{ss} \to (C_{Ass}(L),C_{Bss}(L))$, which 
allows us to see what flow $U_{ss}$ gives the largest output.
In the numerical study we fix $U_{ss} = 6.175e$-$3$, which leads to the solutions in Figure \ref{fig1}.

\begin{remark}
According to our strategy we assume that
$C_{Ass}(z), C_{Bss}(z)$ are computed with a very high precision, representing a quasi-analytic 
solution. Indeed, it is in principle possible to solve the steady-state system formally using Taylor series expansions, but since
the difference with a high precision numerical solution is marginal, we proceed with the numerical approach. 
\end{remark}

\begin{figure}[ht]
\centerline{
\includegraphics[scale=0.7]{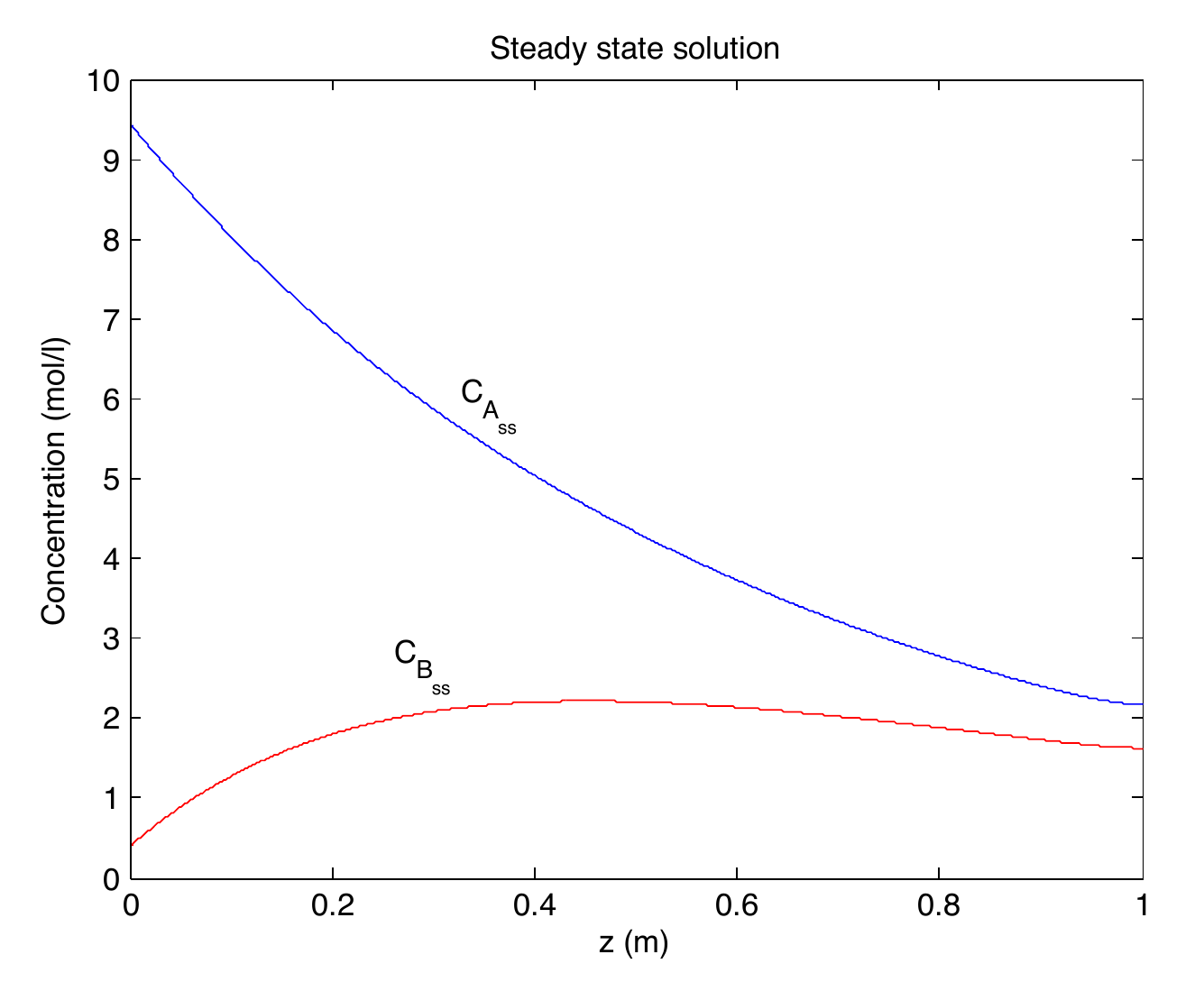}
}
\caption{Steady-state flow computed by Maple for $U_{ss}= 6.175e$-$3$. The parameters were {\tt maxmesh}=1e6, {\tt abserr} = 1e-7. The plots show
$C_{Ass}(z_i)$, $C_{Bss}(z_i)$ for 500 equidistant nodes on $[0,1]$. \label{fig1}}
\end{figure}

Once the steady-state is computed, we linearize the system by putting
$C_A(z,t) = C_{Ass}(z) + c_A(z,t)$, $C_B(z,t)=C_{Bss}(z) + c_B(z,t)$, $U(t) = U_{ss} + u(t)$ with off-sets $c_A,c_B,u$, which leads to
the linearized system 
\begin{align}
\label{linearized2}
\begin{split}
\frac{\partial c_A(z,t)}{\partial t} &= D \frac{\partial^2 c_A(z,t)}{\partial z^2} - U_{ss} \frac{\partial c_A(z,t)}{\partial z}
-(k_1 + 2k_3 C_{Ass}(z))c_A(z,t)   -\frac{\partial C_{Ass}(z)}{\partial z}\, u(t)\\
\frac{\partial c_B(z,t)}{\partial t} &= D \frac{\partial^2 c_B(z,t)}{\partial z^2} - U_{ss} \frac{\partial c_B(z,t)}{\partial z}
+k_1 c_A(z,t) - k_2c_B(z,t) - \frac{\partial C_{Bss}(z)}{\partial z} u(t)
\end{split}
\end{align}
with left boundary conditions
\begin{align}
\begin{split}
&D\frac{\partial}{\partial z} c_A(0,t)- U_{ss} c_A(0,t)  + (C_{Ain}  -C_{Ass}(0)) u(t) = 0 \\
&D\frac{\partial}{\partial z} c_B(0,t)- U_{ss} c_B(0,t)  - C_{Bss}(0) u(t) =0.
\end{split}
\end{align}
and right boundary conditions
\begin{align}
\frac{\partial c_A(L,t)}{\partial z}=0, \qquad \frac{\partial c_B(L,t)}{\partial z}=0.
\end{align}
The measured output is $y(t) = c_B(L,t)$.
The transfer function $G(s) = y(s)/u(s)$ is in principle also available analytically, 
but we continue with the high precision numerics strategy.
We first compute the
transfers  $c_A(z,s)/u(s)$ and $c_B(z,s)/u(s)$, which we obtain by Laplace transforming
the linearized system. This leads to the linear boundary value problem
\begin{align*}
D(c_A)_{zz}(z,s) - U_{ss} (c_A)_z(z,s) - (k_1+2k_3C_{Ass}(z) -s) c_A(z,s) - (C_{Ass})_z(z) u(s)=0\\
D(c_B)_{zz}(z,s) - U_{ss} (c_B)_z(z,s) + k_1 c_A(z,s) - (k_2+s)c_B(z,s) - (C_{Bss})_z(z)u(s)=0
\end{align*}
with Laplace transformed boundary conditions 
\begin{align*}
D(c_A)_z(0,s) - U_{ss} c_A(0,s) + (C_{Ain}-C_{Ass}(0)) u(s) = 0, \\
D(c_B)_{z}(0,s) - U_{ss}c_B(0,s) + (C_{Bin} - C_{Bss}(0)) u(s)= 0 \\
D(c_A)_z(L,s) = 0\\
D(c_B)_z(L,s) = 0
\end{align*}
Solving this boundary value problem  for fixed $s=j\omega$ with $u(s)\equiv 1$ gives the value $G(j\omega)$
of the transfer function $y(s) =c_B(L,s) = G(s) u(s)$. 
This is in fact the method presented in section \ref{illustration}. The magnitude of
$G(s)$ is shown in Figure \ref{sim} (right).

Note that for every fixed $s=j\omega$ we have to solve a static elliptic problem
associated with the dynamic equation (\ref{linearized2}), and we perform these computations with the finest scale
available, so that $G(s)$ is essentially lossless.
In Figure \!\ref{compare} it can be seen that in order to achieve the accuracy in $G(s)$ with a finite-difference discretization $G_{fd}(s)$
we would requires at least 2000 states. So in state-space we would have to perform synthesis
on a system of order 2000 to be sure that we do not lose information. This size is beyond existing synthesis techniques.
In the same vein, any approach based on system reduction would run the risk of losing information
in forming a transfer function based on a reduced model. 

\vspace*{.1cm}
\begin{center}
\begin{tabular}{||  c | c | c | c  || }
\hline\hline
constant & denomination & numerical value & unit \\
\hline\hline
$k_1$ & exchange rate $A\to B$ & $1.39\times 10^{-2}$ & ${\rm s}^{-1}$ \\
\hline
$k_2$ & exchange rate $B \to C$ & $2.78\times 10^{-2}$ & ${\rm s}^{-1}$ \\
\hline
$k_3$ & exchange rate $A+A \to D$ & $2.77 \times 10^{-4}$ & ${\rm l}/{\rm mol / s}$ \\
\hline
$D$ & axial dispersion coefficient& $3.33 \times 10^{-4}$ & ${\rm m}^2/{\rm s}$ \\
\hline
$U_{ss}$ & steady-state velocity & $6.175\times 10^{-3}$ & ${\rm m}/{\rm s}$ \\
\hline
$C_{Ain}$ & inlet concentration of component $A$ & $10.0$ & ${\rm mol}/{\rm l}$\\
\hline
$L$ & length of reactor& $1.0$ & ${\rm m}$\\
%\hline
%$u_{ss}$ & steady-state control & $0.0$ & ${\rm m\cdot mol}/{\rm s\cdot l}$ \\
\hline\hline
\end{tabular}
\end{center}
The spatiotemporally, spatially, and temporally  varying quantities are

\begin{center}
\begin{tabular}{||  c | c | c   || }
\hline\hline
Quantity & denomination & unit \\
\hline\hline
$C_A(z,t)$ & concentration of reactant $A$ & ${\rm mol}/{\rm l}$ \\
%\hline
%$N_A(x,t)$ & moles of component $A$ & ${\rm mol}$ \\
\hline
$C_B(z,t)$ & concentration of reactant $B$ & ${\rm mol}/{\rm l}$ \\
\hline
$C_{Ass}(z)$ & steady-state concentration of $A$ & ${\rm mol}/{\rm l}$ \\
\hline
$C_{Bss}(z)$ & steady-state concentration of $B$ & ${\rm mol}/{\rm l}$ \\
%\hline
%$C_{A0}(x)$ & initial concentration of $A$ & ${\rm mol}/{\rm l}$ \\
%\hline
%$C_{B0}(x)$ & initial concentration of $B$ & ${\rm mol}/{\rm l}$ \\
\hline
$U(t)$ & superficial velocity & ${\rm m}/{\rm s}$\\
%\hline
%$y(t)$ & measured output & ???? \\
\hline \hline
\end{tabular}
\end{center}

\begin{figure}[ht!]
\centerline{
\includegraphics[scale=0.55]{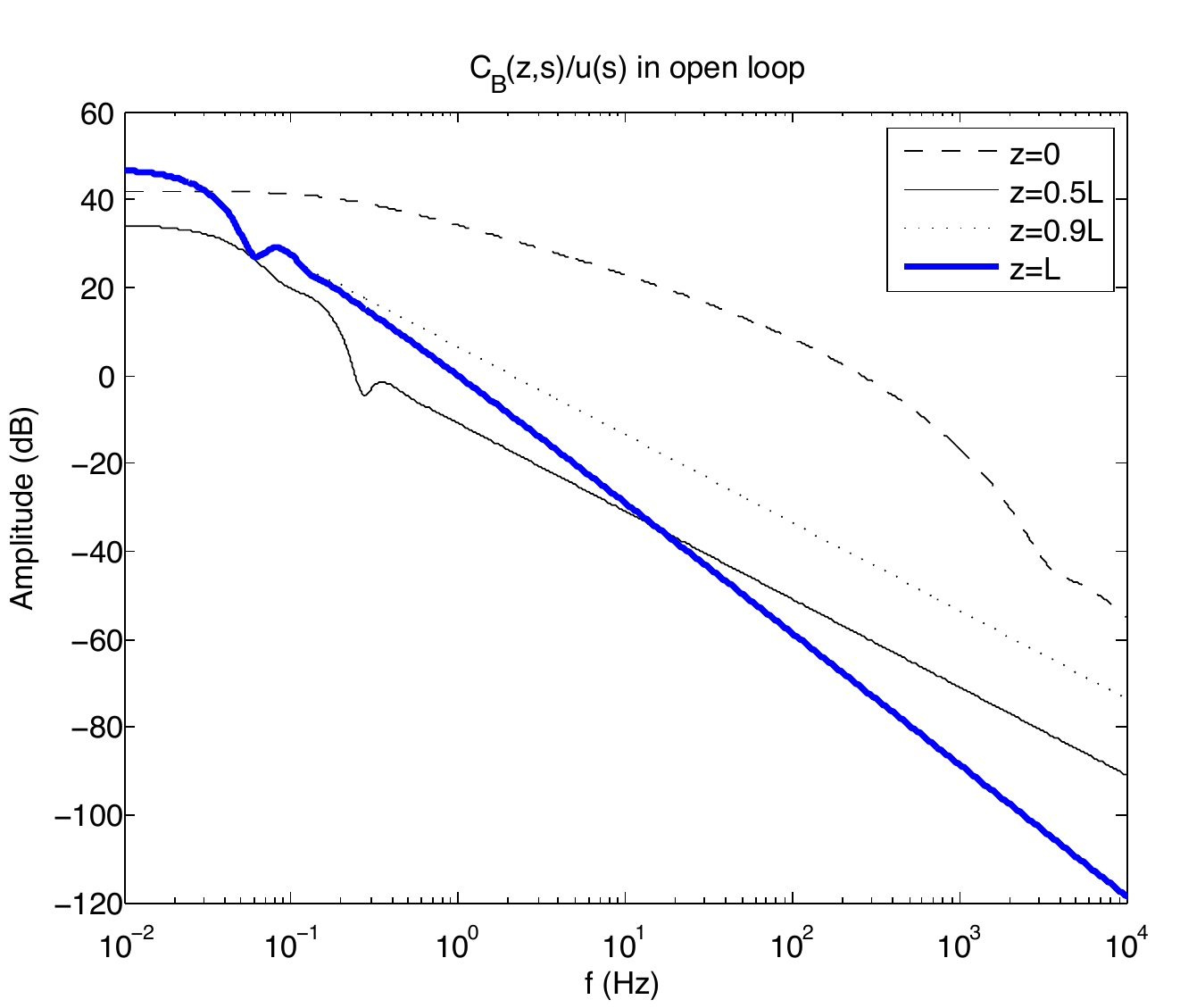}
\includegraphics[scale=0.5]{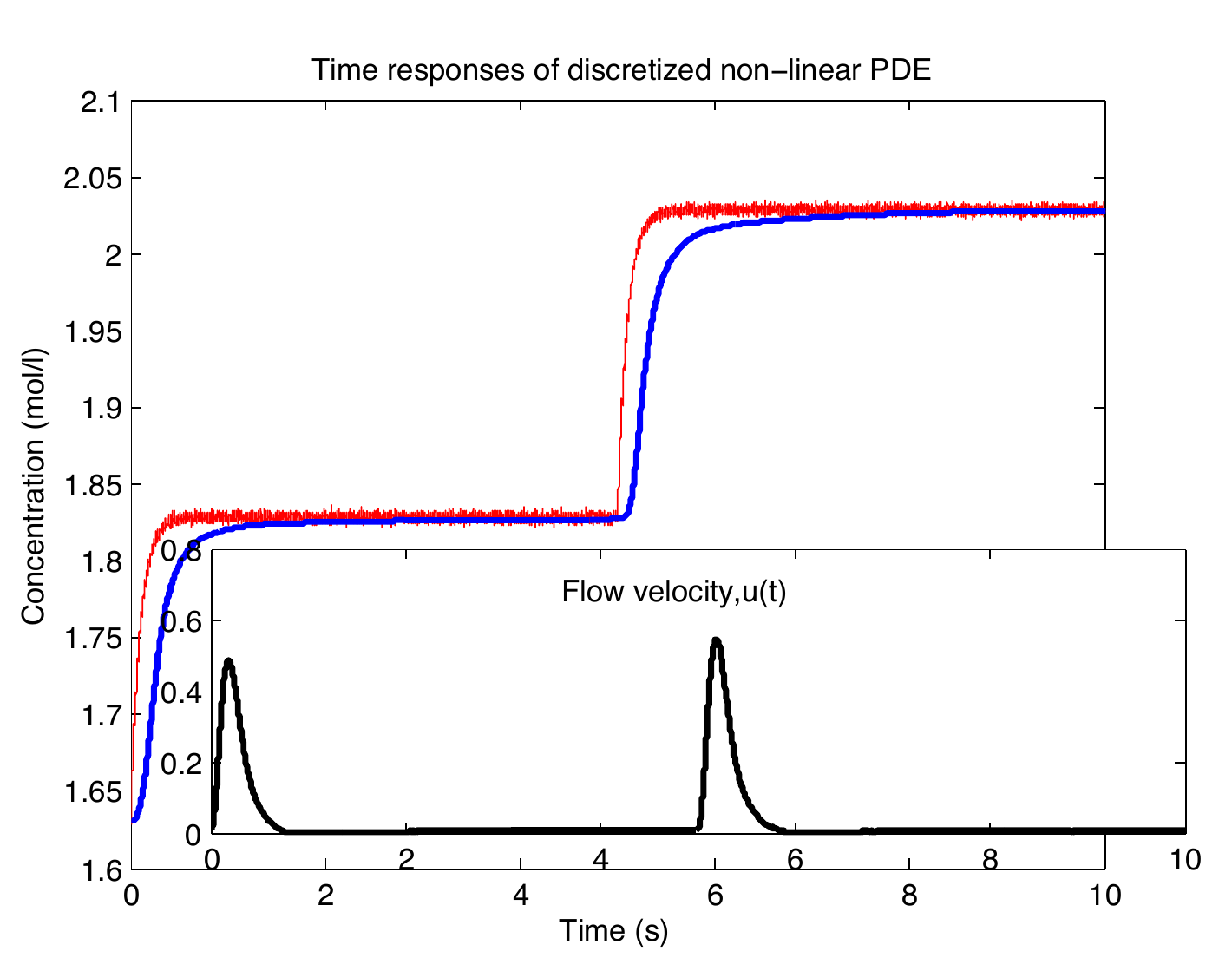}
}
\caption{\small Left: Comparison of transfer function magnitude for different positions of the sensor.  The blue curve ($z=L$) is the 
one chosen in the experiment.
Right: Response of closed-loop system to a noisy reference for optimal controller computed by algorithm \ref{algo3}. \label{sim}}
\end{figure}

\noindent
In the synthesis scheme of Figure \ref{fig-scheme} we use again the channel $(r,n) \to (z_e,z_u)$,  now with the filters
\[
W_e(s)=\frac{10^{-5}s + 1.502}{s+0.07509}, \quad
W_n(s)=\frac{0.00125s^2+0.00035s + 5\cdot 10^{-5}}{2.5\cdot 10^{-5} s^2 + 0.007s + 1}, \quad W_u = 0.1.
\]
Optimization is now
over the class $\mathscr K_3$ of third order controllers, which leads to a tunable vector $\x\in \mathbb R^{14}$, as the system matrix $A_K$ of the controller
(\ref{controller}) 
is parametrized in tridiagonal form. 
The channel is again complemented by the barrier $c\|S(K(\x))\|_\infty$, where $S=T_{re}$ and $c = 0.2$. 
The optimal $H_\infty$-controller $K(\x^*)$ with $\x^*\in \mathbb R^{14}$
computed by algorithm \ref{algo3} is obtained in the form (\ref{controller}) as
\[
A_K=\begin{bmatrix} -.8946 & -36.65 & 0 \\ -1.324 & -50.27 & -18.4 \\
0 & 82.2 & 12.14 \end{bmatrix},
B_K = \begin{bmatrix}  -8.32\\4.728 \\ 2.019 \end{bmatrix},
\begin{matrix}
C_K= \begin{bmatrix}  -1.839 & -3.129 & -9.019 \end{bmatrix}, \\
\vspace{.1cm}
D_K = -.08686.
\end{matrix}
\]
the optimal  gain being $f(\x^*) = 0.464$. The final number of nodes required for a certified result was $|\Omega_{\rm opt}| = 101$,  where 
one update of the grid  in step 6 of algorithm
\ref{algo3} was needed.
The study ends with a non-linear simulation of the optimal controller. In Figure \ref{sim} (right) the response if the nonlinear system to a noisy reference signal
is displayed.

\subsection{Cavity flow}
\label{sect_cavity}
We consider a challenging cavity flow study from \cite{ozbay}, where the infinite-dimensional transfer function
is available analytically and of the form
\[
G(s)=\frac{e^{-\tau_1s}}{p_2(s) + q_2(s)e^{-\tau_2 s} + c e^{-\tau_3 s}}
\]
with quadratic polynomials $p_2(s)$, $q_2(s)$ and delay parameters $\tau_i > 0$.  Figure \ref{cavity} (left) shows the magnitude
plot of $G$ in blue, indicating a large number of resonant peak frequencies.

\begin{figure}[h!]
\begin{center}
\includegraphics[scale=0.25]{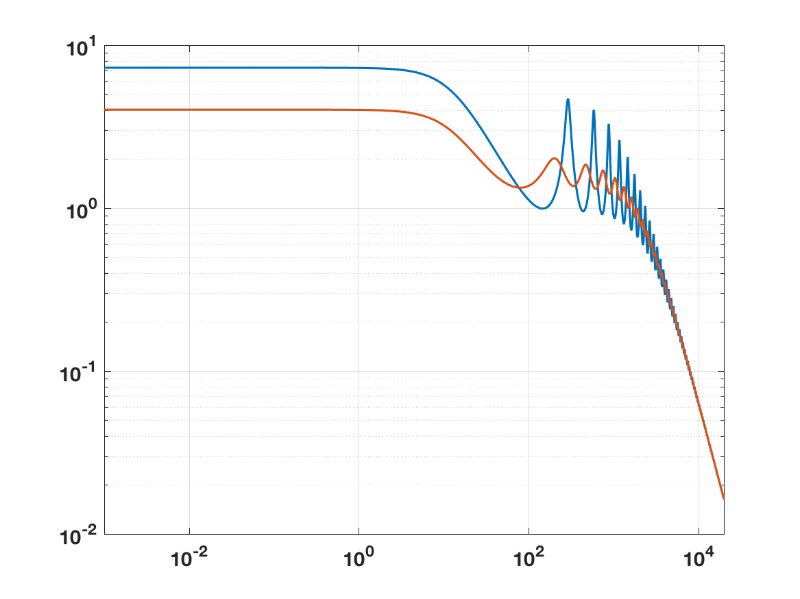}%{disturbanceTF.png}
\includegraphics[scale=0.9]{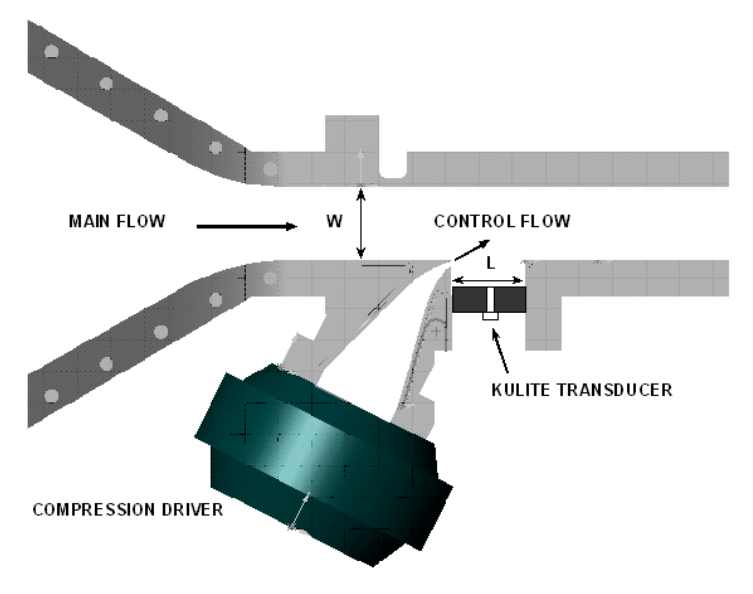}%{../data-driven/DATADRIVENpaper/nyquistCavityFlow.png}
\end{center}
\caption{\small Cavity flow study from \cite{ozbay}.  Left image shows magnitude of $G(j\omega)$ (blue), and of $GS$ in closed loop (red).  Sharp peaks are significantly reduced
by the synthesized control law. \label{cavity}}
\end{figure}

As $H_\infty$-objective we have chosen the channel $\|( W_1S,W_2T )\|_\infty$ with $S$ the closed-loop sensitivity function,
$T$ the complementary sensitivity functions, and with the frequency weighing filters
$W_1(s)=(0.01s+177.4)/(s+50.68)$, $W_2(s)=(100s+500)/(s+50000)$. Optimization (\ref{hinfstruct}) is over the class
$\mathscr K_2$ of order 2 controllers, which features 9 tunable parameters. The optimal controller $K(\x^*)$ with $\x^*\in \mathbb R^9$
computed by algorithm \ref{algo3} is given in transfer function form as
$K^*(s) = (0.5069 s^2 + 119.2s + 1419)/(s^2 + 308s + 1.276e04)$,  and achieves a gain of $\gamma^*= 1.937$, which
improves over the value 1.948 obtained in \cite{ozbay} using a coprime factorization approach.  The final grid size is $|\Omega_{\rm opt}| = 382$. 
The fact that the class $\mathscr K_2$ with which we achieve  $\gamma^*=1.937$  is much simpler than the infinite-dimensional
controller structure  used in \cite{ozbay} explains why
our novel approach represents a significant improvement in this study.

\section{Conclusion}
We have presented a novel method to compute $H_\infty$-controllers for infinite dimensional systems and in particular for boundary and distributed control
of PDEs. At the core our approach uses a non-smooth trust-region bundle
algorithm to solve a  frequency discretized version of the infinite-dimensional problem. The method was justified theoretically and tested numerically on a reaction-convection-diffusion
equation, on a Van de Vusse reactor, and for control of a cavity flow.   A convergence certificate for the non-smooth trust-region algorithm under 
Kiwiel's aggregation rule was proved, allowing to limit the number of cuts in the tangent program (\ref{tangent}) to any fixed number $N \geq 3$, and answering 
in the affirmative a question
left open in \cite{ANR:2016}.

\end{document}